\newcommand{\knotdia}{\kappa}
\newcommand{\FlatKnotDia}{\mathscr{K}}
\newcommand{\KnotShad}{\FlatKnotDia}
\newcommand{\knotshad}{k}
\newcommand{\ArbKnotDiaUK}{\ArbClass}
\newcommand{\subknotdia}{s\kappa}
\newcommand{\SubKnotShad}{S\mathscr{K}}
\newcommand{\subknotshad}{sk}
\newcommand{\sublinkshad}{s\ell}
\newcommand{\knotgrowth}{\mu_K}
\newcommand{\ArbClass}{\mathscr{C}}
\newcommand{\arbclass}{c}
\newcommand{\Prb}{\mathbb{P}}
\newcommand{\FlagContract}[2]{#1 - #2}
\DeclareMathOperator{\Tr}{Tr}
\DeclareMathOperator{\Unk}{Unk}
\DeclareMathOperator{\argmax}{argmax}
\newtheorem{conjecture}{Conjecture}
\newtheorem*{conjecture*}{Conjecture}
\newtheorem{theorem}{Theorem}
\newtheorem*{theorem*}{Theorem}
\newtheorem{proposition}[theorem]{Proposition}
\newtheorem{lemma}[theorem]{Lemma}
\newtheorem*{lemma*}{Lemma}
\theoremstyle{definition}
\newtheorem*{definition}{Definition}
\space\url{#1}%
\tikzset{->-/.style={decoration={ markings, mark=at position #1 with
      {\arrow{>}}},postaction={decorate}}}
\begin{document}
\title[]{Slipknotting in random diagrams} \author{Harrison
  Chapman} \email{hchaps@gmail.com}
\address{Department of Mathematics\\
 Colorado State University, Fort Collins CO}
\date{\today}

\begin{abstract}
  The presence of slipknots in configurations of proteins and DNA has been shown to affect their functionality, or alter it entirely. Historically, polymers are modeled as polygonal chains in space. As an alternative to space curves, we provide a framework for working with subknots inside of knot diagrams via knotoid diagrams. We prove using a pattern theorem for knot diagrams that not only are almost all knot diagrams slipknotted, almost all \emph{unknot} diagrams are slipknotted. This proves in the random diagram model a conjecture yet unproven in random space curve models. We also discuss conjectures on the enumeration of knotoid diagrams.
\end{abstract}
\maketitle

\section{Introduction}
\label{sec:intro}

An important concern in modern geometric knot theory is that of detecting knotting in open strands. Indeed, as any open strand is topologically equivalent to the straight interval (a stronger version of this is the classical ``light-bulb'' theorem~\cite{Rolfsen1976}), knotting in an open strand is a result of its \emph{geometry}. Identifying knots in open strands is important in physics and biology: Long chain polymers, such as proteins and DNA often present as long open strands. Moreover ``knotting'' in polymers (closed or open) has been shown to affect their behavior and effectiveness: Enzymes, for example, may not function with their usual efficacy (or at all!) if they are not an appropriately knotted contortion of their protein~\cite{Rawdon2015,Rawdon2012}. Slipknotting in space curves is summarized in, e.g.,~\cite{Millett2010}.

In~\cite{Cantarella2015} we introduced the diagram model of knotting as an alternative to the usual geometric space curve models: A \emph{knot diagram} is a 4-regular topological planar map (\textit{i.e.}\ an embedded planar graph) together with sign decoration at each 4-valent vertex. The \emph{random diagram model} picks a random knot diagram with the counting measure from the set of all knot diagrams with a fixed number of vertices. Knots have been studied in the view of topological maps~\cite{Coquereaux16,ZinnJustin2009,Schaeffer2004,Jacobsen2002} and there has been work on sampling knot diagrams randomly~\cite{Dunfield2014talk,Diao2012}. We note that our model differs in that we are considering \emph{all} knot diagrams (not just equivalence classes or alternating diagrams) and that we want to be sure that we sample with the \emph{uniform} measure on diagrams of fixed size. In~\cite{Chapman2016} we show that the diagram model obeys an asymptotic pattern theorem---similar to those of self-avoiding lattice polygons and equilateral space polygons---that allowed us to show principally that knot diagrams are asymptotically composite with factors of any topological type.

We extend those definitions here to discuss subknotting and slipknots in knot diagrams as an application of this theory. Namely, as the pattern theorem for diagrams can add patterns in a completely local sense, we can show that not only are all knot diagrams almost certainly slipknotted but also that almost all \emph{unknot} diagrams are slipknotted in the random diagram model.

\section{Slipknots in diagrams}
\label{sec:slipknots}

A (knot or link) diagram can be viewed in a manner similar to that of a pdcode~\cite{Cantarella2015,Coquereaux16,Chapman2016} as a \emph{combinatorial map}: A diagram with \(n\) crossings is a set of \(4n\) flags (half-edges) together with \(2n\) edges (unordered pairs of flags which are connected together) and \(n\) crossings (a cyclically oriented quadruple of flags which meet at a point, oriented counter clockwise together with a sign in \(\{+, -\}\)), so that each flag is contained in precisely one edge and one crossing. This information determines a surface embedding of the map structure; we only consider diagrams here where we have an embedding into the oriented sphere. The collection of edges is represented as a permutation \(\tau\) on the set of flags consisting of disjoint cycle permutations of length 2, and the collection of crossings \(\sigma\) is a product of disjoint cycle permutations each of length 4. Under this view, the faces are the cycles of the permutation \(\sigma\tau\) (furthermore the permutation \(\sigma\tau\) permutes flags \emph{clockwise} about their face), and the cycles of the permutation \(\sigma^2\tau\) describe the link components of the diagram with their possible orientations; a knot diagram hence requires that \(\sigma^2\tau\) has precisely 2 cycles. A \emph{shadow} is the underlying map structure of the diagram; that is, the diagram without the crossing sign information in \(\sigma\).

The condition that the diagram embeds in the sphere hence is that \(\sigma\tau\) has precisely \(n+2\) cycles. We note that diagrams embedded into other surfaces are \emph{virtual diagrams}~\cite{Kauffman1999}. We believe that the proofs here and in~\cite{Chapman2016} should extend in some cases to different classes of virtual diagrams, but will not discuss that here. Further generalizations of knot shadows (in particular, those which are non-generic or non-self-transversal) are discussed by Turaev in~\cite{Turaev_2005}; it is likely that some such classes are amenable to the same methods.

\subsection{Knotoid diagrams}
\label{sec:opendia}

First studied by Turaev in 2010~\cite{Turaev2012}, a \emph{multi-knotoid diagram} or \emph{open diagram} is a link diagram which is missing one edge. Hence it has two ``loose'' flags (\textit{i.e.}\ flags without a parent edge) called \emph{legs}, who need not lie on the same face. 2-tangle diagrams (diagrams of tangles with two flags on a common ``exterior'' face) form a subset of multi-knotoid diagrams. We furthermore continue to require that multi-knotoid diagrams embed into the plane. For a multi-knotoid diagram, cycles of \(\sigma^2\tau\) now enumerate all of the closed components of the diagram with either orientation, while there is \emph{one} cycle corresponding to the open component traversed first forwards, then backwards. The condition that a multi-knotoid diagram is planar is now that \(\sigma\tau\) has precisely \(n+1\) cycles. A multi-knotoid diagram is a \emph{knotoid diagram} if \(\sigma^2\tau\) has precisely one cycle corresponding to the open strand component. An example of a knotoid diagram is given in Figure~\ref{fig:opendia}. The underlying map structure of a knotoid diagram, without the crossing information, is again called a \emph{knotoid shadow}.

\begin{figure}[hbtp]
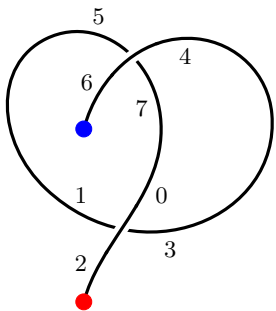

  \centering
  \includestandalone{figs/31_01_opendia}
  \caption{The knotoid diagram defined by \(\sigma = (-0123)(-4567), \tau =(07)(15)(34)\). This diagram has knot type \(\frac 123_1 + \frac 120_1\) under our \(\pm\) definition. Flag labels are drawn inside of their face, \textit{i.e.}\ their parent cycle in \(\sigma\tau = (04)(167)(235)\).}
  \label{fig:opendia}
\end{figure}

Let \(D\) be a diagram, and \(e = (ab)\) an edge in \(D\) connecting flags \(a\) and \(b\). Then \(D \setminus e\) is the 2-tangle diagram given by \(\sigma, \tau'\), where the permutation \(\tau'\) is determined by \(\tau' = \tau(ab)\). Pictorially, \(D \setminus e\) looks like \(D\) except with the edge \(e\) ``cut'' so that the flags \(a\) and \(b\) are the loose flags of the tangle. In many cases the discussion that follows may apply equally to multi-knotoid diagrams, but for the sake of brevity we will only explicitly consider knotoid diagrams.

\begin{figure}[hbtp]
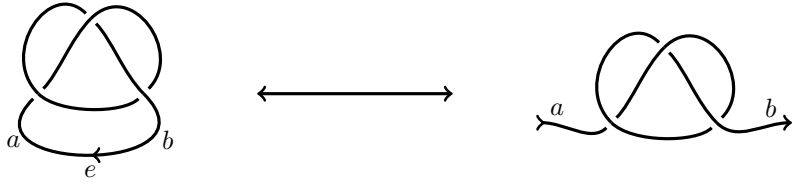

  \centering
  \includestandalone{figs/open_cut}
  \caption{A diagram \(D\) and its open version cut at \(e\), \(D \setminus e\). Either diagram has \(3_1\) as its respective knot type.}
  \label{fig:opencutdia}
\end{figure}

\begin{definition}
  Let \(S\) be a knotoid diagram of at least one crossing and \(a, z\) its loose flags. Say that \(a\) is connected to a crossing \(v\) joining flags \(a,b_1,c_1,d_1\), oriented counterclockwise (namely, flag \(c_1\) meets the crossing opposite \(a\)). Say that the latter three flags are part of edges \(e_b = (b_1b_2)\), \(e_c = (c_1c_2)\), and \(e_d = (d_1d_2)\).

  Then the \emph{contraction \(\FlagContract{S}{a}\) of \(S\) by flag \(a\)} is the new knotoid diagram produced by deleting crossing \(v\), edges \(e_b, e_c, e_d\), and flags \(a, b_1, c_1, d_1\) and inserting the edge \(e = (b_2d_2)\) joining flags \(b_2\) and \(d_2\). \(\FlagContract{S}{a}\) is a new knotoid diagram with one fewer crossing, two fewer edges, and loose flags \(c_2\) and \(z\).

  \begin{figure}[hbtp]
    \centering
    \begin{subfigure}{0.45\linewidth}
      \centering
      \includestandalone[width=0.666\textwidth]{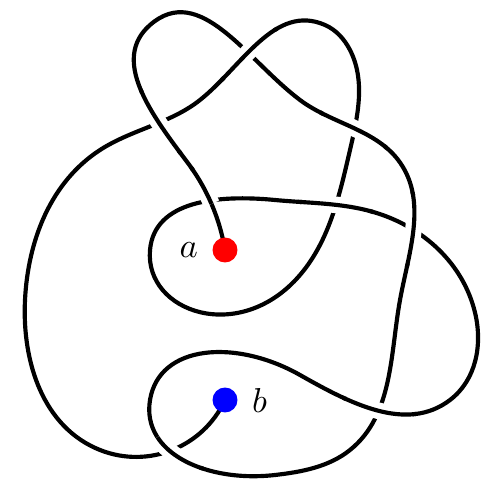}
      \label{fig:k10_146_open}
    \end{subfigure}
    \hfill
    \begin{subfigure}{0.45\linewidth}
      \centering
      \includestandalone[width=0.666\textwidth]{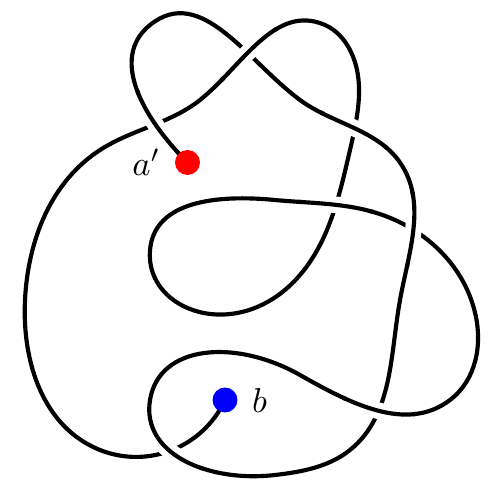}
      \label{fig:k10_146_open_contracta}
    \end{subfigure}
    \caption{a knotoid diagram \(D\) and its contraction along flag \(a\), \(D-a\).
    The diagram \(D\) has knot type \([D] = \frac 124_1 + \frac 120_1\),
    while the knot type of the contraction is \([D-a] = 0_1\).}
    \label{fig:opendiacontract}
  \end{figure}

  If \(S\) has precisely one crossing, then its contraction (by either flag) is the ``trivial'' knotoid diagram.

  If \(T\) is a knotoid diagram with no more crossings than \(S\), then say \(S\) \emph{contains \(T\) as a knotoid diagram}, \(T \le S\), if \(S = T\) or there exists a series of contraction operations on \(S\) that produces the knotoid diagram \(T\). If \(D\) is a diagram with as many or more crossings than \(T\), then say \(D\) \emph{contains T} (\(T \le D\)) as a knotoid diagram if there exists an edge \(e\) in \(D\) so that \(D\setminus e\) contains \(T\).

  \begin{figure}[hbtp]
    \centering
    \begin{subfigure}{0.45\linewidth}
      \centering
      \includestandalone[width=0.666\textwidth]{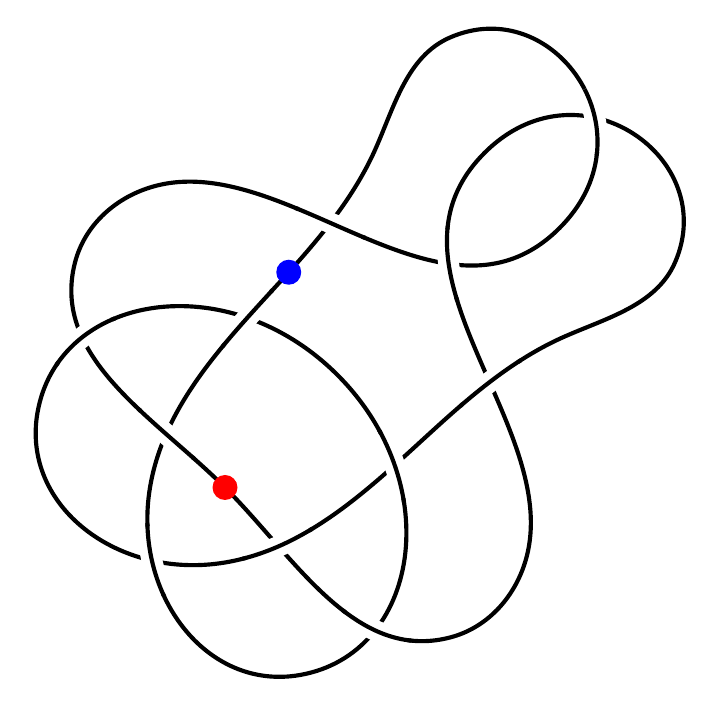}
      \caption{The knot diagram \(D\).}
    \end{subfigure}
    \hfill
    \begin{subfigure}{0.45\linewidth}
      \centering
      \includestandalone[width=0.666\textwidth]{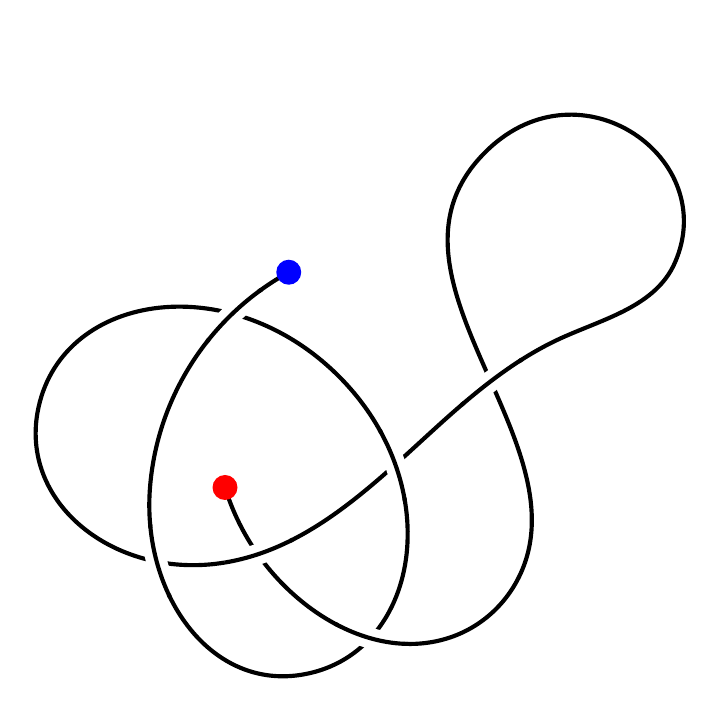}
      \caption{A subdiagram \(S\) of \(D\).}
      \label{fig:k11a135_open}
    \end{subfigure}
    \caption{A minimal knot diagram \(D\) for 11a135 contains \(S\) as a
      subdiagram. As seen in Figure~\ref{fig:diaoverunderclose}, the knot type
      of \(S\) is \([S] = \frac 124_1 + \frac 120_1\).}
    \label{fig:opensubdiacontain}
  \end{figure}
\end{definition}

This contraction operation is equivalent to the forbidden moves \(\Omega_\pm\) (also called \(\Phi_\pm\) in~\cite{Gugumcu_2017}). This is just one example of a contraction operation for diagrams. Provided a different contraction operation behaves similarly enough, one can obtain the same results on subknotting presented here.

As combinatorial objects, one hopes to find an enumeration of knotoid shadows and diagrams. Counting shadows is sufficient: Diagrams of \(n\) crossings are a \(2^n\)-fold covering of shadows of \(n\) crossings (only differing by crossing choice). In the case of link shadows and multi-knotoid shadows, enumeration is complete and simple. The number of link shadows with \(n\) crossings is known from a bijection with quadrangulations~\cite{Tutte1963,Brezin1978} to be,
\[\ell_n = \frac{2(3^n)}{(n+2)(n+1)}\binom{2n}{n} \underset{n\to\infty}{\sim}\frac 2{\sqrt \pi}12^n n^{-5/2}.\]
In fact, link shadows are \((n+2)\)-to-2 covered by a class of blossom trees~\cite{Schaeffer1997,SchaefferPlanarMap}. This class of blossom trees counts multi-knotoid shadows~\cite{Bouttier_2002,Bouttier2003}, providing the counts,
\[\sublinkshad_n = \frac{3^n}{n+1}\binom{2n}n \underset{n\to\infty}{\sim}\frac 1{\sqrt \pi}12^n n^{-3/2}. \]
Notably, the asymptotic exponential growth rates in the leading-order terms are equivalent, while the power law growth terms differ by a factor of \(n\). Enumeration of knot shadows is more complicated; a conjecture of Schaeffer and Zinn-Justin~\cite{Schaeffer2004} proposes that the number of knot shadows grows asymptotically as,
\[\knotshad_n \underset{n\to\infty}{\sim} C \knotgrowth^n n^{\gamma-2}, \]
where \(\gamma = \frac{1-\sqrt{13}}{6}\), and \(\knotgrowth \approx 11.416\pm 5\) by numerical estimates. Existence of \(\knotgrowth = \lim_{n\to\infty}{k_n^{1/n}}\) is known~\cite{Chapman2016} and numerical data continues to back up the conjecture, but there is no rigid proof of the asymptotic formula. Less is known about the number of knotoid shadows besides that \(\knotshad_n \le \subknotshad_n\), although it is reasonable to guess an asymptotic formula for leading terms,
\[\subknotshad_n \underset{n\to\infty}{\sim} C_I \mu_I^n n^{\alpha-2}. \]
In parallel with the known results for link shadows and multi-knotoid shadows, one could guess \(\alpha = \gamma+1\), although we have little data on this. On the other hand, we know:
\begin{theorem}
  The limit \(\lim_{n\to\infty}{\subknotshad_n^{1/n}} = \mu_I\) exists.
  \label{thm:knotoidgrowth}
\end{theorem}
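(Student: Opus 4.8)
The plan is to prove existence exactly as one proves existence of the knot-shadow growth rate $\knotgrowth$ in~\cite{Chapman2016}: establish that $\subknotshad_n$ is (super)multiplicative and has bounded exponential growth, then invoke Fekete's lemma. First I would define a concatenation product $K_1 \ast K_2$ of two knotoid shadows. Given $K_1$ with legs $a_1,z_1$ and $K_2$ with legs $a_2,z_2$, I draw $K_2$ inside the face of $K_1$ that contains the tail leg $z_1$ and insert a single new edge $(z_1a_2)$. Working on the sphere this can always be arranged without introducing crossings, so $K_1\ast K_2$ is planar with $m+n$ crossings, head leg $a_1$, and tail leg $z_2$. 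Since each $K_i$ is a single open strand with no closed components, gluing tail to head fuses the two open strands into one and creates no closed loop, so $K_1\ast K_2$ is a genuine knotoid shadow.

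Next I would check that this product is injective on pairs of fixed sizes $(m,n)$. The crucial observation is that the inserted edge is a \emph{bridge} lying on the open strand: deleting it disconnects the copy of $K_1$ from the copy of $K_2$. Traversing the open strand from the head $a_1$, the crossings of $K_1$ are met before this bridge and those of $K_2$ after it, so among the open-strand bridges of $P=K_1\ast K_2$ the junction is characterised as the unique one with exactly $m$ crossings on the head side. Cutting $P$ there and relabelling the two exposed flags as legs returns precisely $(K_1,K_2)$, so the product is injective and
\[ \subknotshad_m\cdot\subknotshad_n \le \subknotshad_{m+n}. \]

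For finiteness of the growth rate I would use that knotoid shadows form a subclass of multi-knotoid shadows, whence $\subknotshad_n \le \sublinkshad_n \sim \tfrac{1}{\sqrt{\pi}}12^n n^{-3/2}$ and therefore $\limsup_n \subknotshad_n^{1/n}\le 12<\infty$. Supermultiplicativity together with this bound lets Fekete's lemma conclude that $\lim_n \subknotshad_n^{1/n} = \sup_n \subknotshad_n^{1/n} =: \mu_I$ exists and is finite.

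The step I expect to be the main obstacle is the injectivity argument, specifically confirming that cutting \emph{any} open-strand bridge yields two valid knotoid shadows and that the junction is unambiguously located. The delicate cases are boundary ones involving trivial, crossingless sub-knotoids, where a bridge adjacent to a leg might also leave exactly $m$ crossings on the head side. If clean injectivity cannot be arranged, I would instead mark the junction bridge, at the cost of a polynomial factor, obtaining $\subknotshad_m\cdot\subknotshad_n \le \mathrm{poly}(m+n)\,\subknotshad_{m+n}$; this weaker, approximately supermultiplicative inequality still yields the limit by the standard Hammersley-type generalisation of Fekete's lemma, since the error term of order $\log(m+n)$ grows slowly enough to be absorbed.
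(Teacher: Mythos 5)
Your proposal is correct and follows essentially the same route as the paper: join the two knotoid shadows head-to-tail, recover the pair by cutting the junction edge, bound \(\subknotshad_n \le \sublinkshad_n\), and conclude by Fekete's lemma. The only cosmetic difference is how the junction is located---the paper indexes the flags along the open strand from the tail and deterministically cuts the edge \(((4n-1)\;(4n))\), while you characterize it as the unique bridge with exactly \(m\) crossings on the head side; the boundary case you worry about never occurs, since both endpoint flags of any bridge lie on crossings (there are no degree-2 vertices), so every bridge other than the junction has at most \(m-1\) or at least \(m+1\) crossings on its head side.
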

\begin{proof}
  This is a proof by Fekete's lemma on supermultiplicative sequences. First, as \(\subknotshad_n \le \sublinkshad_n\), \(\limsup_{n\to\infty}{\subknotshad_n^{1/n}}\) is bounded. Let \(n,m \in \mathbb N\). We will show that \(\subknotshad_{n+m} \ge \subknotshad_n\subknotshad_m\) by providing a reversible mapping \(\SubKnotShad_n \times \SubKnotShad_m \hookrightarrow \SubKnotShad_{n+m}\).

  Let \(K_n, K_m\) be knotoid diagrams of \(n\) and \(m\) crossings, respectively. Let the knotoid diagram \(K\) be formed by joining the head of \(K_n\) to the tail of \(K_m\). The unique open link component of \(K\), together with the tail leg of \(K\) provides a way to index all flags of \(K\) starting from \(0\) in link-component ordering from tail to head. As \(K\) was produced through a composition of two diagrams of \(n\) and \(m\) crossings respectively, the edge \(((4n-1)\;(4n))\) is necessarily a disconnecting edge of \(K\), and removing this edge yields back the original ordered pair of diagrams \(K_n\) and \(K_m\). Together with Fekete's lemma, this yields the result.
\end{proof}
Additionally, we will present some numerical data in Section~\ref{sec:asymptotics} on the conjecture,
\begin{conjecture*}
  The connective constants for knot shadows and knotoid shadows are the same. That is,
  \[\lim_{n\to\infty}{\subknotshad_n^{1/n}} = \mu_I = \knotgrowth.\]
\end{conjecture*}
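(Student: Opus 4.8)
The plan is to prove the two inequalities $\mu_K \le \mu_I$ and $\mu_I \le \mu_K$ separately. The first is immediate from the bound $\knotshad_n \le \subknotshad_n$ recorded above: since both limits are known to exist (the left by~\cite{Chapman2016}, the right by Theorem~\ref{thm:knotoidgrowth}), taking $n$th roots and letting $n \to \infty$ gives $\knotgrowth = \lim \knotshad_n^{1/n} \le \lim \subknotshad_n^{1/n} = \mu_I$. Combined with the trivial $\subknotshad_n \le \sublinkshad_n$ (a knotoid shadow is a multi-knotoid shadow with no closed components), this already pins $\mu_I$ into the window $[\knotgrowth, 12]$. All of the difficulty is in the reverse inequality $\mu_I \le \knotgrowth$.

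The strategy for $\mu_I \le \knotgrowth$ is to build a closure map from knotoid shadows to knot shadows and to control how many crossings it creates. Given a knotoid shadow $S$ with $n$ crossings and legs $a, z$, join $a$ to $z$ by an embedded arc on the sphere; since $S$ has a single open component and no closed ones, the result is a knot shadow, with one new crossing for each edge of $S$ that the closing arc meets. Writing $d(S)$ for the minimum number of such crossings, $d(S)$ equals the dual-graph distance between the faces of $S$ carrying $a$ and $z$, and the closure produces a knot shadow with $n + d(S)$ crossings. The base case $d(S) = 0$ is exactly the image of the edge-cut operation $K \mapsto K \setminus e$: cutting an edge of a knot shadow leaves both legs on a common face, and closing recovers the pair $(K, e)$. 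Hence the \emph{closable} knotoid shadows are in bijection, up to the symmetry factors inherent in choosing an edge among the $2n$ available, with pairs (knot shadow, marked edge), so they number $\Theta(n \cdot \knotshad_n)$ and contribute exactly growth rate $\knotgrowth$.

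It remains to show that knotoid shadows with $d(S) > 0$ do not contribute a larger exponential rate. Grouping by the value $d = d(S)$, one obtains a bound of the shape
\[ \subknotshad_n \;\le\; \mathrm{poly}(n)\sum_{d \ge 0} N_{n,d}, \]
where $N_{n,d}$ counts knotoid shadows with closure distance $d$. Each such shadow injects into a knot shadow of $n + d$ crossings decorated by the data needed to invert the closure---namely which $d$ of the $n+d$ crossings were added and along which arc---so that $N_{n,d} \le \binom{n+d}{d}\,2^{O(d)}\,\knotshad_{n+d}$. When $d = o(n)$ the decorating factor is $\exp(o(n))$ and the term has growth rate at most $\knotgrowth$; summing the polynomially many such terms preserves this rate. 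The main obstacle, and the reason the statement remains only a conjecture, is the tail $d = \Theta(n)$: here both $\binom{n+d}{d}$ and $\knotshad_{n+d}$ are exponential in $n$ with base exceeding $\knotgrowth$, so the argument closes only if one can show that knotoid shadows whose legs sit in faces at dual-distance $\Theta(n)$ are exponentially rare. Establishing such a large-deviation bound---plausibly via a transfer/pattern argument in the spirit of~\cite{Chapman2016}, or via concentration of typical inter-face distances (which for random planar maps scale like $n^{1/4}$, hence $o(n)$)---is precisely the gap, and at present the equality $\mu_I = \knotgrowth$ is supported only by the numerical evidence presented in Section~\ref{sec:asymptotics}.
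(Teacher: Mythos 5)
You have not proved the statement, and you could not have: in the paper this is an open \emph{conjecture}, and the paper's own contribution toward it (Section~\ref{sec:asymptotics}) is a conditional theorem plus numerical and heuristic evidence. Your easy direction matches the paper's: $\knotshad_n \le \subknotshad_n$ (splitting the root edge), both limits exist (Theorem~\ref{thm:knotoidgrowth} for the right-hand one), so $\knotgrowth \le \mu_I \le 12$. Your reduction of the hard direction is also essentially the paper's: partition knotoid shadows by the geodesic distance $d$ between the faces carrying the two legs, close minimally to land in knot shadows with $n+d$ crossings, and observe that the argument closes only if the bulk of shadows has $d = o(n)$. So you correctly located the gap --- control of geodesic distance --- which is exactly why the statement remains a conjecture.

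Two refinements in the paper are worth absorbing. First, the injection there is cleaner: a distance-$\ell$ knotoid shadow is closed \emph{deterministically}, and the map is inverted on its image by contracting $\ell$ crossings back from the root, giving $\subknotshad_n[\ell] \le \knotshad_{n+\ell}$ with no $\binom{n+d}{d}\,2^{O(d)}$ decoration (your decorated bound is correct but lossier, and it is what forces you into trouble at $d = \Theta(n)$). Second, and more substantively, the hypothesis you ask for is stronger than needed. You require a large-deviation bound making the $d = \Theta(n)$ tail exponentially rare, because you attempt to estimate the tail through the closure injection, where the decorated bound indeed exceeds $\knotgrowth^n$. The paper never estimates the tail at all: assuming only that the \emph{mean} geodesic distance $f(n)$ satisfies $f(n) = o(n)$, Markov's inequality shows that at least half of all $n$-crossing knotoid shadows have $d \le 2f(n)$, i.e.\ $g(n) = 2f(n)$ is a ``pivot'' for $\alpha = \tfrac12$, whence
\[ \subknotshad_n \;\le\; 2\,(g(n)+1)\,\knotshad_{n+\lceil g(n)\rceil}, \]
and the exponential rate collapses to $\knotgrowth$ since $g(n)/n \to 0$. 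The tail is absorbed by the constant $\alpha^{-1}$ rather than bounded term-by-term, and even a polynomially small pivot fraction would suffice. Of course, no upper bound on $f(n)$ is currently known (only numerical evidence that it scales like the $n^{1/4}$ behavior of multi-knotoid shadows), so the conjecture stays open either way --- but if you are hunting for the missing lemma, aim at the mean, not at large deviations.
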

This conjecture is consistent both with asymptotic growth rates of multi-knotoid diagrams as well as with what is known in the case of self-avoiding walks as compared to self-avoiding polygons~\cite{Hammersley1961}.

\subsection{Subknots and slipknots in diagrams}
\label{sec:subknots}

For open knots in space, knot type is defined as a probability distribution of knot types of actual knots obtained by closing the open knot in all ways dictated by some choice of closure scheme.

An advantage of working with knotoid diagrams as opposed to open knots in space is that there exists a designated normal direction to the thickened plane in which the diagram resides. This suggests the following reasonable definition of the knot type of a knotoid diagram:
\begin{definition}
  Consider a knotoid diagram \(S\) with loose flags \(a, z\). An \emph{open knot type} of \(S\) is an equivalence class of \(S\) under some equivalence relation. We only consider open knot types who satisfy analogous characteristics to the MDS method~\cite{Millett2005, Millett2010} of identifying subknots in open space polygons:
  \begin{enumerate}
  \item The open knot type of a knotoid diagram does not change under small perturbations.
  \item The open knot type of a knotoid diagram which is ``almost closed'' is the same as that of the ``obvious closure''. That is, the type of the knot diagram \([D]\) is equivalent to the type of the knotoid diagram \([D \setminus e]\).
  \end{enumerate}
\end{definition}

Much of what follows holds independent of choice of equivalence relation. 

\begin{definition} We provide some standard definitions of open knot type:
  \begin{enumerate}
  \item The following were first considered by Turaev: There exists a knot type \([S_{+}]\) called the \emph{overpass closure knot type} of \(S\) represented by any of the diagrams produced by adding a generic non-self intersecting curve connecting \(a\) to \(z\) and passing \emph{over} every edge of \(S\); the knot type \([S_+]\) does not depend on the choice of closing curve by Reidemeister moves (indeed, if the knotoid diagram represents an interval who lives entirely in a thickened sphere inside \(S^3\), then these closing flags can be made to exist entirely outside the sphere). Similarly, there is a knot type \([S_{-}]\) called the \emph{underpass closure knot type} of \(S\) represented by diagrams similarly closed with a curve connecting \(a\) to \(z\) but passing \emph{under} every edge in \(S\).
  
  \item In line with the probabilistic knot types used to study open knots in space~\cite{Millett2005,Rawdon2015} we make the following definition: The \emph{over-underpass knot type of a knotoid diagram \(S\)} is the normalized probability sum of knot types \(\frac 12 [S_{+}] + \frac 12 [S_{-}]\) called the \emph{knotting spectrum}.

  \item
    We can also define more general open knot types based on the underlying cell structure of the open diagram \(S\). The \emph{min-closure knot type} of a knotoid diagram is the normalization of the sum,
    \[\sum_{\rho} \sum_{\sigma} [S_{\rho,\sigma}],\]
    where the first summation is over minimal non-self-intersecting endpoint closures \(\rho\) of the knotoid diagram which are minimal in that they cross the fewest edges of \(D\) and the second summation is over all possible sign assignments for new crossings produced by closing \(D\) with \(\rho\).

  \item Finally, we have the \emph{knotoid type} of the knotoid diagram \(S\), which is its equivalence class under the relations generated only by the standard Reidemeister moves acting locally on portions of the diagram away from its legs, and disallowing the forbidden moves of contraction. Knotoid type is \emph{different} depending on whether knotoid diagrams are drawn in the plane or the sphere. As our diagrams are all assumed to be spherical, we will only consider spherical knotoid type and suppress the descriptor in the following discussion. Like knot types, knotoid types have invariants by results of Turaev~\cite{Turaev2012} and G\"ug\"umc\"u and Kauffman~\cite{Gugumcu_2017}, (namely, computable polynomial invariants) which aid in knotoid classification. Furthermore, although they are a relatively new topic of study, there is work on applying knotoids to study entanglement of proteins and other polymers~\cite{G_g_mc__2017,Goundaroulis_2017,Goundaroulis_2017b}.
  \end{enumerate}

  In all cases, an open knot type is a vector in the positive orthant of the real vector space generated by some basis of types. It hence comes with an inner product \(\langle \cdot, \cdot \rangle\) defined as the usual dot product. Our consideration that open knot types be interpreted as probabilities insists furthermore that an open knot type is a unit vector.
  
  \begin{figure}[hbtp]
    \centering
    \begin{subfigure}{0.45\linewidth}
      \centering
      \includestandalone[width=0.666\textwidth]{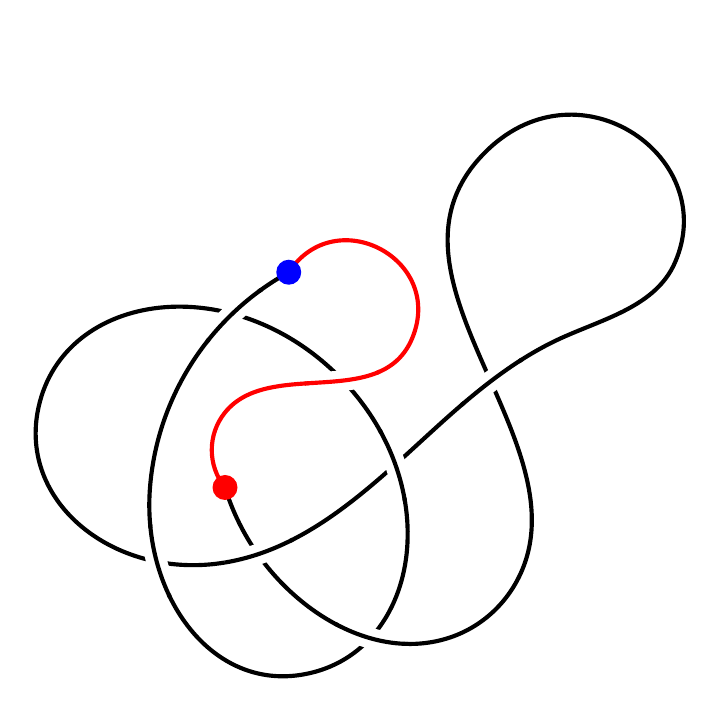}
      \caption{An over closure of \(S\), with knot type \(4_1\)}
      \label{fig:k11a135_open_trivoverclose}
    \end{subfigure}
    \hfill
    \begin{subfigure}{0.45\linewidth}
      \centering
      \includestandalone[width=0.666\textwidth]{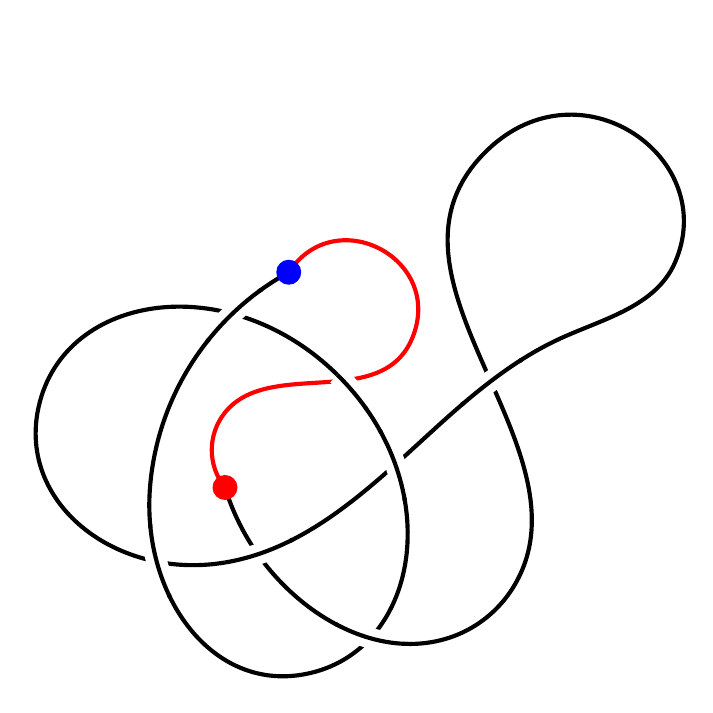}
      \caption{An under closure of \(S\), with knot type \(0_1\)}
      \label{fig:k11a135_open_trivunderclose}
    \end{subfigure}
    \caption{Closures of the knotoid diagram \(S\) from
      Figure~\ref{fig:k11a135_open}: \(S_{+}\) and \(S_{-}\).}
    \label{fig:diaoverunderclose}
  \end{figure}

  For any open knot type in the discussion that follows, we will say that \([S]\) is (strongly) trivial \([S] = 0_1\), weakly trivial if \(\langle [S], 0_1 \rangle > 0\) and probabilistically trivial if \(\langle [S], 0_1 \rangle > 1/2\). 
\end{definition}

Let \([\cdot]\) denote one of the above open knot types.

\begin{definition}
  A diagram \(D\) \emph{(strongly) contains \([K]\) as a subknot} if there exists a knotoid diagram \(S\) contained in \(D\) whose knot type is precisely \([S] = [K]\). For certain open knot types, we say the diagram \(D\) \emph{weakly contains} a knot type \([K]\) if the inner product \(\langle [D], [K]\rangle\) is nonzero, or \emph{probabilistically contains} a knot type \([K]\) if \(\langle [D], [K]\rangle \ge 1/2\).
\end{definition}

There is a chain of implications; strong implies probabilistic implies weak, although converses are not necessarily true. We will talk about diagrams containing knot types without specification of strength; the results in this paper do not depend on one or the other.

Millett~\cite{Millett_2016,Millett_2017} has a definition of knot type for subdiagrams of knot diagrams which is equivalent to our definition of the knot type of the over closure. Indeed, this is as any \emph{ascending} (possibly self-intersecting) over-closure path will produce a closed diagram with the same knot type, \textit{c.f.}\ Figure~\ref{fig:k11a135_open_millettoverclose}. Millett asks how subknot populations depend on the choice of open knot type; we will see that asymptotically and probabilistically at least, the choice does not matter.

\begin{figure}
  \centering
    \includestandalone[width=0.3\textwidth]{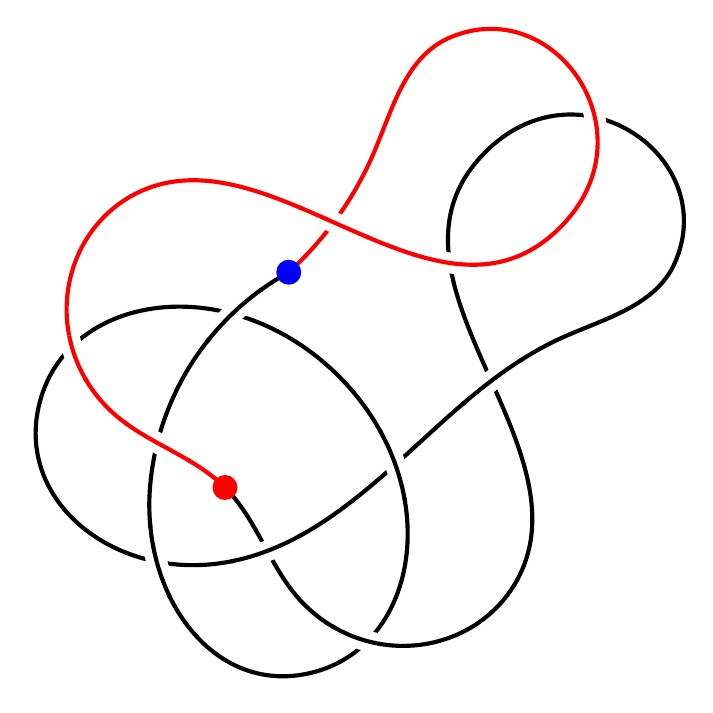}
  \caption{The Millett ascending closure of the subdiagram \(S\) inside of
    \(D\) in Figure~\ref{fig:opensubdiacontain}.}
  \label{fig:k11a135_open_millettoverclose}
\end{figure}

We note that the over-underpass definition of knot type behaves particularly nicely under contraction; if \(a\) is a loose flag of \(S\), then \(\langle [S], [\FlagContract{S}{a}] \rangle \ge \frac 12\). That is, if \(S\) is a knotoid diagram of type \( \frac 12[K_1] + \frac 12[K_2] \),there exists a knot type \([X]\) (it may be \([K_1], [K_2]\), or neither) so that \([\FlagContract{S}{a}]\) is either \(\frac 12[K_1] + \frac 12[X]\) or \(\frac 12[X] + \frac 12[K_2]\). Additionally, for the min-closure definition, we have that \(\langle [S], [\FlagContract{S}{a}] \rangle > 0\).

Different choices of open knot type behave differently with the usual Reidemeister moves:
\begin{theorem}
  In the case where open knot type is one of the \emph{overcrossing closure}, \emph{undercrossing closure}, or the \emph{over-undercrossing closure}, open knot type is invariant under the usual Reidemeister moves (\textit{i.e.}\ those which do not interact with the loose ends).

  In the case where open knot type is the \emph{min-closure} knot type, the min-closure knot type is not invariant under Reidemeister moves. However, we have that any two knotoid diagrams \(S, S'\) related by a sequence of Reidemeister moves are not orthogonal in the vector space of knot types; \(\langle [S], [S'] \rangle > 0\).

  By definition, any two knotoid diagrams with the same \emph{knotoid type} are furthermore related by a sequence of Reidemeister moves.
\end{theorem}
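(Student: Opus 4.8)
The plan is to handle the three assertions separately, since the first feeds the other two. The geometric engine is a single fact about over-arcs, which I would establish once and reuse.

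For the invariance claim, fix a Reidemeister move carrying \(S\) to \(S'\) supported in a disk \(B\) disjoint from the legs \(a,z\). Since \([S_+]\) does not depend on the choice of closing curve, I would pick an over-closure arc \(\gamma\) from \(a\) to \(z\) that avoids \(B\) entirely; this is possible because the complement of a disk on the sphere is again a disk, so \(a\) and \(z\) can be joined inside it. Then \(S_+ = S\cup\gamma\), and because \(\gamma\) meets and passes over only edges lying outside \(B\), it is also a legitimate over-closure of \(S'\), giving \(S'_+ = S'\cup\gamma\). The move, being confined to \(B\) and disjoint from \(\gamma\), carries \(S_+\) to \(S'_+\), so \([S_+]=[S'_+]\). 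The same argument with an under-arc gives \([S_-]=[S'_-]\), and invariance of the over-underpass type \(\tfrac12[S_+]+\tfrac12[S_-]\) then follows by linearity.

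For the min-closure, the key observation is that its defining sum already contains \([S_+]\) and \([S_-]\) as summands. Fix any minimal closing path \(\rho\), which by minimality crosses a fixed number \(k\) of edges; among the \(2^k\) sign assignments is the all-over one. I claim the resulting diagram has type \([S_+]\): an arc all of whose crossings are over-crossings can be lifted entirely into the upper half-space, where any two arcs sharing the endpoints \(a,z\) are isotopic rel endpoints, so its closure type is path-independent and equals the overpass closure. The all-under assignment likewise contributes \([S_-]\). Writing the normalized min-closure as a nonnegative combination of knot types, the coefficient of \([S_+]\) is therefore at least \(2^{-k}>0\) (and the degenerate case \(k=0\), legs on a common face, collapses the distribution to the single type \([S_+]=[S_-]\)). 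If \(S'\) arises from \(S\) by Reidemeister moves away from the legs, the first part gives \([S_+]=[S'_+]\), so both min-closures carry a strictly positive coordinate on the common basis type \([S_+]\); since all coordinates are nonnegative, \(\langle [S],[S']\rangle\) is bounded below by the product of these two positive coordinates, proving \(\langle [S],[S']\rangle>0\).

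To show the min-closure is genuinely not invariant I would give one explicit small example: a diagram whose legs lie on a common face, so the min-closure is the single pure type \([S_+]\), together with a Reidemeister II move whose supporting disk borders that face and splits it, forcing every minimal closure of the new diagram to cross edges and hence to mix in a second knot type; the two normalized sums then differ. The final assertion is immediate from the definition, since the knotoid type is the equivalence relation \emph{generated} by the standard Reidemeister moves away from the legs, so diagrams of a common knotoid type are by construction joined by a sequence of such moves. The step I expect to require the most care is the lemma identifying the all-over sign assignment of a minimal path with the path-independent type \([S_+]\): it is precisely what ties the cell-structure-dependent min-closure to the Reidemeister-invariant over- and under-closures, and it is also where one must check the coefficient bookkeeping (uniformity of \(k\) across minimal paths and the \(k=0\) boundary case) is clean.
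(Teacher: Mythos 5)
Your first and third parts coincide with the paper's own proof: you pick an over-closure avoiding the disk supporting the move, lift the move to the closed diagram to get \([S_+]=[S'_+]\), repeat for under-closures, and invoke linearity for the over-underpass type; the knotoid-type assertion is definitional in both treatments. For the positivity statement \(\langle [S],[S']\rangle>0\) you are in fact \emph{more} complete than the paper, which simply asserts \(\langle [S],[S_+]\rangle>0\) for the min-closure and then writes \(\langle [S],[S']\rangle=\langle [S_+],[S'_+]\rangle+K\) with \(K>0\). You supply the missing lemma: the all-over sign assignment of any minimal closing path is itself a legitimate over-closure (it passes over every edge it crosses, and \([S_+]\) is independent of the closing curve), so it contributes \([S_+]\) to the normalized sum with coefficient at least \(2^{-k}\); since \(k\) is uniform across minimal paths by minimality, and all coordinates are nonnegative, the bound \(\langle [S],[S']\rangle \ge 2^{-k}\cdot 2^{-k'}>0\) follows from \([S_+]=[S'_+]\). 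That bookkeeping is sound.

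The genuine gap is in the non-invariance claim, which you leave as a schematic construction resting on a false inference: ``forcing every minimal closure of the new diagram to cross edges and hence to mix in a second knot type.'' Crossing edges does not by itself produce a second type, because all \(2^k\) sign assignments may yield the same knot. Concretely, start with the trivial (crossingless) knotoid and perform a Reidemeister II move that pushes one sub-arc across the other so the legs land in different faces: every sign assignment of the now edge-crossing minimal closures still gives the unknot, so the min-closure is unchanged and your mechanism produces no counterexample. Indeed, by your own first part \([S_+]\) survives the move, so any genuine example must make a \emph{different} sign assignment (one passing under a knotted portion) change the closure type---exactly the structure your sketch does not guarantee. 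The paper resolves this with an explicit figure: a knotoid diagram containing a bigon \(B\) whose min-closure includes a prime alternating knot of minimal crossing number \(5\), while the diagram obtained by removing \(B\) via a Reidemeister II move has min-closure exactly \(3_1\). Until you pin down a comparable concrete diagram, the negative half of your second paragraph is unproven.
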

\begin{proof}
  Let \(D\) and \(D'\) be two knotoid diagrams which are related by a single Reidemeister move, \(\phi\). As the knot type \([D_+]\) is independent of choice of over-closure, we may choose \(D_+\) to be an over-closure which does not intersect the region to which the Reidemeister move \(\phi\) applies on \(D\). Hence \(\phi\) lifts to a Reidemeister move on \(D_+\), producing \(\phi(D_+) = D'_+\), a closure of \(D'\). Hence \([D_+] = [D'_+]\) as (usual) knot type is Reidemeister invariant. This extends to both under-closures and arbitrary sequences of Reidemeister moves, proving the claim.

  The result about min-closure knot types follows from that the min-closure knot type of a diagram \([D]\) obeys \(\langle [D], [D_+] \rangle > 0\). So given \([D']\) which differs by a sequence of Reidemeister moves we have \(\langle [D], [D'] \rangle = \langle [D_+], [D'_+] \rangle + K\), where \(K > 0\). The knotoid diagram in Figure~\ref{fig:minclosure_noninv} shows that the min-closure knot type is not invariant under the Reidemeister moves: After removing bigon \(B\) with a Reidemeister II move, the red min-closure presents as a prime alternating knot with minimal crossing number 5. On the other hand, reducing the knotoid diagram by removing bigon \(B\) yields a knotoid diagram with unique min-closure knot-type exactly \(3_1\).
  \begin{figure}
    \centering
    \includestandalone[width=0.3\textwidth]{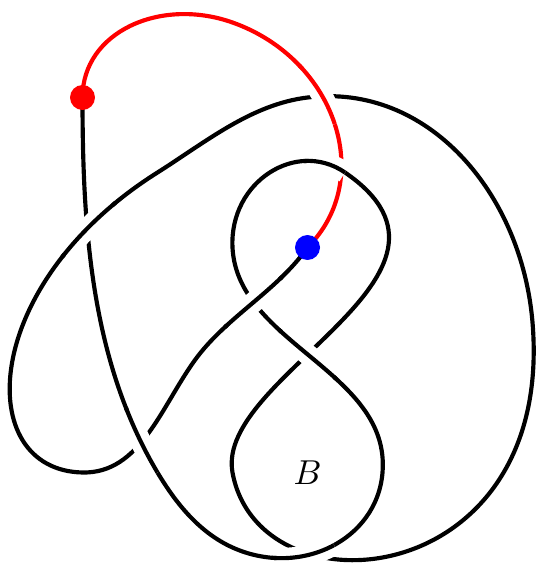}
    \caption{A knotoid diagram (with a min-closure strand) showing that min-closure is not Reidemeister invariant.}
    \label{fig:minclosure_noninv}
  \end{figure}
\end{proof}

Indeed, this are only some \emph{choices} of definition of the open knot type for knotoid diagrams. Just as in the case of space polygons, one can construct other definitions that satisfy the same conditions as the MDS method. The results that follow \emph{do not depend} on the definition chosen.

In a loose physical sense, a slipknot ``appears'' in a piece of string if, as the string is pulled taut, the slipknot unties itself. We define this concept for diagrams.
\begin{definition}
  Let \([K]\) be a knot type and \(D\) a diagram (open or closed). \(D\) \emph{contains \([K]\) as a slipknot} if there is a subdiagram \(U \le D\) where \(U\) contains \([K]\) as a subknot but \(\langle [U], 0_1 \rangle = p > 0\). The slipknot is contained \emph{weakly} if \(p > 0\), \emph{probabilistically} if \(p \ge 1/2\), or \emph{strongly} if \(p = 1\). The knotoid diagram \(S \le U \le D\) which represents the knot type \([K]\) is an \emph{ephemeral knot}.
\end{definition}
This definition is the analogue of that in~\cite{Millett2010} in the case of diagrams.

\section{Results}
\label{sec:results}

\subsection{In General}
\label{sec:general}

For the cases of classes of arbitrary link diagrams and knot diagrams (whether prime, reduced, or otherwise), we have shown in~\cite{Chapman2016} that there exist pattern theorems similar to those of Kesten for self avoiding lattice walks.

For example, in the case of the class of all knot diagrams; (\(|D|\) is the number of crossings, or \emph{size} of a diagram \(D\))
\begin{theorem*}[Pattern Theorem for general knot diagrams~\cite{Chapman2016}]
  Let \(T\) be a 3-edge-connected 2-tangle diagram of precisely one link component. Then as \(T\) can be connect-summed to any edge of a knot diagram \(D\) to produce a knot diagram (\textit{i.e.}\ knot diagrams are closed under connect summation with \(T\)), there exists \(c > 0\) and \(1 > d > 0\) so that
  \[ \Prb(D \text{ contains \(\le c|D|\) copies of T as a tangle}) <
    d^{|D|}. \]
  Namely, asymptotically almost surely, a diagram \(D\) contains \(T\) as a subtangle (in fact, a linear proportion).
\end{theorem*}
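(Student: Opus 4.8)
The plan is to adapt Kesten's pattern theorem for self-avoiding walks to the diagram setting. Two ingredients are needed: the existence of an exponential growth rate \(\mu = \lim_{n\to\infty} a_n^{1/n}\) for the number \(a_n\) of \(n\)-crossing knot diagrams, and a \emph{local, reversible insertion operation} that adds a single controlled copy of \(T\). The first I would obtain by the same supermultiplicativity argument used for Theorem~\ref{thm:knotoidgrowth}: connect-summation of two knot diagrams along a chosen root edge is supermultiplicative, so \(\log a_n\) is superadditive and Fekete's lemma gives \(\mu = \sup_n a_n^{1/n} < \infty\), with finiteness coming from the crude bound \(a_n \le 2^n \ell_n\). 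The hypothesis on \(T\)---that knot diagrams are closed under connect-summing \(T\) at \emph{any} edge---is precisely what guarantees the insertion stays inside the class, since \(T\) contributes a single link component.

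I would next set up the insertion--deletion correspondence. Fix \(t = |T|\) and let \(a_n^{(j)}\) count the \(n\)-crossing knot diagrams containing exactly \(j\) disjoint copies of \(T\). Call an edge \emph{free} if it is disjoint from every existing copy of \(T\); a diagram with \(j\) copies then has at least \(2n - c_0 j\) free edges for a constant \(c_0\) depending only on \(T\). Connect-summing \(T\) into a free edge produces a diagram of size \(n+t\), and I must argue that it has exactly \(j+1\) copies---the inserted one together with the untouched originals---and no accidental new copy. This is where the structural hypotheses enter: because \(T\) is 3-edge-connected, an inserted copy is isolated by a canonical \(2\)-edge cut, so each copy is unambiguously locatable and the reverse connect-sum is well defined. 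Marking the inserted copy makes the map \((D, e) \mapsto (D', v)\) injective, its inverse being deletion of the marked copy.

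Counting the two sides of this injection then yields the core estimate
\[ n\, a_n^{(j)} \le (j+1)\, a_{n+t}^{(j+1)} \qquad (j \le \epsilon n), \]
since the source set of pairs (diagram with \(j\) copies, free edge) has size at least \(n\, a_n^{(j)}\) and injects into the set of pairs (an \((n+t)\)-diagram with \(j+1\) copies, a choice of one of them), of size \((j+1)\, a_{n+t}^{(j+1)}\). Iterating this inequality downward from a moderate value of \(j\) to \(j=0\) produces a product \(\tfrac{1}{j!}\prod_{i}(n - it)\) that behaves like \(n^j/j!\); a Stirling estimate then shows that the number-of-copies distribution concentrates, so that for a suitably small \(c > 0\) the diagrams with at most \(cn\) copies satisfy \(\limsup_n \big(\sum_{j \le cn} a_n^{(j)}\big)^{1/n} < \mu\). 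Dividing by \(a_n\), whose growth rate is exactly \(\mu\), gives \(\Prb(D \text{ has } \le c|D| \text{ copies of } T) < d^{|D|}\) for some \(d \in (0,1)\).

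The main obstacle is the bookkeeping in the insertion step rather than the final summation. I must pin down a definition of ``occurrence of \(T\)'' that makes occurrences genuinely disjoint and canonically detectable, and then rule out that connect-summing at a free edge either destroys an existing copy or, together with its new neighbourhood, manufactures an unintended extra one. Controlling this---so that insertion changes the count by exactly one and deletion is a true inverse---is where the 3-edge-connectivity of \(T\) and its being a one-component tangle do the real work; without them the injection and the clean recursion above could fail. Once the correspondence is exactly one-to-one with the copy count shifting by one, the passage from the recursion to the exponential gap is the standard large-deviation argument, with the only extra care being the harmless size shift by the constant \(t = |T|\).
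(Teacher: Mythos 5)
This statement is quoted in the paper from~\cite{Chapman2016} and is given no proof here, so there is no in-paper argument to compare line by line; the relevant comparison is with the cited proof, which is indeed a Kesten-style insertion--deletion argument of exactly the shape you describe (existence of the growth constant via supermultiplicative connect summation and Fekete, then an injection that inserts a marked copy of \(T\) at an available edge, then a binomial/Stirling concentration estimate). So your overall strategy is the right one and matches the cited source; the final large-deviation step, including the choice \(m = \epsilon n\), the requirement \(\epsilon < e\mu^{-t}\), and division by \(a_n\) with \(a_n^{1/n}\to\mu\), is standard and you have it essentially correct.

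The genuine gap is the one you flag but do not close: the core inequality \(n\,a_n^{(j)} \le (j+1)\,a_{n+t}^{(j+1)}\) requires that insertion at a free edge change the occurrence count by \emph{exactly} one, and this is a lemma, not a bookkeeping convention. You need (i) a definition of ``occurrence of \(T\)'' as a subtangle cut off by a 2-edge cut, with ``free edge'' excluding not just the interiors of existing copies but also the cut edges bounding them, so that no existing copy is destroyed; (ii) a proof that no occurrence of \(D'\) properly contains the inserted copy --- this one is cheap, since every occurrence has exactly \(t\) crossings; and (iii) an uncrossing argument showing two distinct occurrences cannot partially overlap: if they did, the 2-edge cut isolating one would induce a cut of at most two edges inside the other, contradicting 3-edge-connectivity of \(T\). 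Without (iii) the image of your injection lands in classes \(a_{n+t}^{(j')}\) with \(j' > j+1\) and the downward iteration breaks; with it, deletion of the marked copy is a true inverse and the recursion is exact. (A minor further check: deletion of the marked copy could in principle create a new occurrence through the restored edge \(e\), but this does not threaten injectivity, only the surjectivity you never needed.) One more small point: you should say why the \emph{one-component} hypothesis on \(T\) is used --- connect summation of a tangle with more than one link component would take knot diagrams out of the class, so it is what makes the insertion class-preserving, not part of the overlap analysis. Once (i)--(iii) are in place, your proposal is a faithful reconstruction of the cited proof.
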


We discuss similar hypotheses and ``insertion'' constructions that yield Pattern Theorems for different classes of diagram discussed in~\cite{Chapman2016}. We first show that we can construct tangles that insert slipknots into diagrams while preserving closure within these classes;

\begin{lemma}
  \label{lem:doublinglemma}
  Let \([K]\) be a fixed knot type. Then there exists a reduced 2-tangle diagram \(D_2\) whose insertion into a knot diagram by edge replacement produces a new diagram which contains \([K]\) as a slipknot. Additionally, there exists a prime 4-tangle diagram \(D_4\) whose insertion into a prime knot diagram by crossing replacement produces a new diagram which contains \([K]\) as a slipknot. If additionally \([K]\) is an alternating knot type, \(D_4\) is an alternating 4-tangle which can be inserted appropriately into alternating diagrams.
\end{lemma}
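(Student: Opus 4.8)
The plan is to build, for the fixed type $[K]$, an explicit \emph{doubled} slipknot tangle and verify the two required properties separately: that its closure is trivial (so that the ambient class, including the unknot class, is preserved under insertion) and that contracting it exposes a sub-knotoid of type $[K]$. The conceptual point driving the whole construction is that a slipknot can never arise from a connect sum: since the monoid of knot types under $\#$ is free, no nontrivial $[K]$ has an inverse, so no concatenation of long knots has trivial closure. The untying must therefore come from \emph{doubling} a strand back along itself, which is why the lemma carries this name and, as we will see, why prime and alternating diagrams force a genuinely different insertion than edge replacement.

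For the 2-tangle, I would start with a reduced knotoid (long-knot) diagram $S_K$ with $[S_K]=[K]$, which exists because cutting any diagram of $K$ at an edge produces a knotoid whose over/under/min closure is $[K]$ (property (2) of open knot type). I then form $D_2$ as the antiparallel double: take a blackboard-parallel push-off of $S_K$ traversed in the reverse direction and join the two heads by a turnback cap, leaving the two tails as the legs of a one-component $2$-tangle. The first claim is that $\langle[D_2],0_1\rangle=1$: the turnback cap slides back through the antiparallel push-off as a cascade of Reidemeister~II removals, retracting the whole strand to a trivial arc, so that edge-replacement insertion replaces a strand of $D$ by a detour of type $[D]\#[D_2]=[D]\#0_1=[D]$ and in particular carries unknot diagrams to unknot diagrams. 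The second claim is that $S_K\le D_2$: contracting $D_2$ repeatedly from the push-off-side leg strips the push-off strand and the cap — each contraction $\FlagContract{\cdot}{a}$ realizing the inverse of one antiparallel pass — and terminates at the knotoid $S_K$ with $[S_K]=[K]$. Thus $D_2$ contains $[K]$ as a subknot while being weakly trivial, i.e.\ as a slipknot. A final bookkeeping step removes any nugatory crossings introduced at the cap so that $D_2$ is reduced and satisfies the $3$-edge-connectivity hypothesis of the cited pattern theorem.

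For prime knot diagrams, edge replacement is unavailable because connect-summing a tangle into a single strand reintroduces a separating sphere and destroys primality. Instead I would adapt $D_2$ into a $4$-tangle $D_4$ for crossing replacement by adjoining the crossing's second strand as an arc that passes entirely \emph{over} the slipknotted region and exits at the two remaining legs. Because an over-passing arc is liftable off the rest of the diagram, the closure of $D_4$ remains trivial, so crossing-replacement insertion preserves the knot type and, in particular, the prime-unknot subclass; and because the second strand can be routed to puncture (in two points) the separating sphere of the $D_2$-connect-sum, that sphere no longer decomposes the tangle, making $D_4$ a prime $4$-tangle. The slipknot survives intact — the same contraction from the push-off leg recovers $S_K$ — so $D_4$ inserts $[K]$ as a slipknot into prime knot diagrams while meeting the connectivity and primality hypotheses needed for the corresponding pattern theorem of~\cite{Chapman2016}.

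The alternating refinement, and the main obstacle, is that the ``pass-over-everything'' arc that conveniently trivializes the closures above destroys alternation. I would instead begin from a reduced \emph{alternating} diagram of the alternating type $[K]$, present it as an alternating long knot, and perform the double so that the crossings along the push-off and at the turnback cap continue the alternating pattern, yielding an alternating — hence automatically reduced — $4$-tangle $D_4$, together with a check that crossing-replacement into a reduced alternating diagram preserves alternation. The delicate part of the whole argument is precisely this simultaneous satisfaction of the two competing demands: the double must be retractable (trivial closure) yet must still reveal $[K]$ under contraction, and in the alternating case one must achieve both \emph{without} the freedom to route the undoing over the diagram, keeping the doubled tangle alternating and reduced while remaining isotopically retractable. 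By contrast, the trivial-closure and contraction claims for the non-alternating $D_2$ and $D_4$ are routine Reidemeister~II and connect-sum-sphere arguments.
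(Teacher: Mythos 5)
Your constructions of \(D_2\) and \(D_4\) are essentially the paper's own. The antiparallel double with a turnback cap is precisely the paper's ``doubled ribbon'' construction (each crossing of a diagram of \([K]\) is doubled and one adjacent pair of loose legs is joined, Figure~\ref{fig:dia_doubling}); your Reidemeister~II retraction argument and the contraction walk along the push-off leg make explicit what the paper asserts pictorially, namely that traversing one doubled strand halfway exposes a knotoid of type \([K]\) while the full traversal is trivial. Likewise your prime 4-tangle --- adjoining a second strand that passes over the \(D_2\) region and punctures the would-be separating sphere, so that crossing replacement acts as a connect sum of the \(D_2\) portion --- matches the paper's \(D_4\) (Figure~\ref{fig:dia_tripling}). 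These two parts are correct and on the paper's route.

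The alternating clause, however, contains a genuine gap. Your plan to ``perform the double so that the crossings along the push-off and at the turnback cap continue the alternating pattern'' describes an object that does not exist: in a blackboard-parallel double, each doubled crossing block forces two consecutive crossings of the same type along each strand (the over strand crosses both parallel copies of the under strand), so a doubled diagram is never alternating; and repairing signs locally is self-defeating, since in an alternating diagram every bigon is a clasp, so the Reidemeister~II cascade on which your triviality argument rests is unavailable anywhere in the tangle. This is exactly why the paper abandons doubling in the alternating case and uses a different construction: each crossing of an alternating diagram of \([K]\) is replaced by a \emph{triple weaved ribbon}, an adjacent pair of outer edges is joined (creating one new external crossing chosen to continue the alternating scheme), yielding an alternating prime 4-tangle \(D_4^a\) (Figure~\ref{fig:dia_alttripling}) in which following an outer strand halfway still exposes \([K]\) and the full traversal yields the slipknot. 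Note also that the slipknot definition only requires \emph{weak} triviality, \(\langle [U], 0_1 \rangle > 0\), so one need not produce a strongly retractable tangle in the alternating case --- insisting on \(\langle [D_2], 0_1 \rangle = 1\), as you do, is what makes your alternating version impossible. You correctly flagged this tension as the delicate point, but flagging it is not resolving it: as written, the alternating \(D_4\) is asserted rather than constructed, and that part of the lemma remains unproved in your proposal.
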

\begin{proof}
  Given the knot type \([K]\), let \(D\) be a prime diagram representing \([K]\). Then for an arbitrary edge \(e\), \(D \setminus e\) is a knotoid diagram of open type \([K]\).

  We produce the reduced 2-tangle diagram \(D_2\) by replacing each crossing of \(D\) with a doubled ribbon as in Figure~\ref{fig:dia_doubling}, and joining one adjacent pair of loose legs. Following either of the doubled strands halfway yields the subknot for \([K]\); proceeding all the way along both doubled strands yields the slipknot.

  \begin{figure}
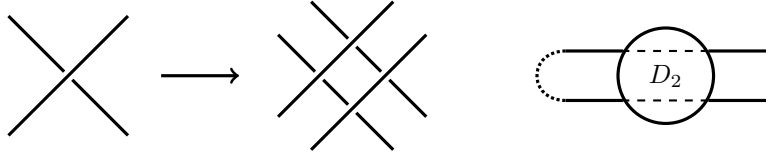

    \centering
    \includestandalone{figs/tang_doubling}
    \caption{A scheme for doubling a knot diagram of \([K]\) to introduce a slipknotted 2-tangle. A pair of adjacent external legs after the doubling process is joined by the dotted line.}
    \label{fig:dia_doubling}
  \end{figure}

  We produce the prime 4-tangle diagram \(D_4\) by modifying \(D_2\) by adding a strand passing over the diagram as in Figure~\ref{fig:dia_tripling} that keeps the tangle prime and whose insertion corresponds at the knot type level to a connected sum of the \(D_2\) portion.
  \begin{figure}
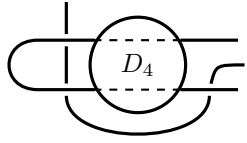

    \centering
    \includestandalone{figs/tang_double4c}
    \caption{A scheme for modifying a doubled knot diagram of \([K]\) to introduce a slipknotted 4-tangle.}
    \label{fig:dia_tripling}
  \end{figure}
  
  In the case that \(D\) is alternating, we produce the prime alternating 4-tangle diagram \(D_4^a\) by replacing each crossing of \(D\) triple weaved ribbon as in Figure~\ref{fig:dia_alttripling}, and joining one adjacent pair of outer edges (producing a new crossing with the inner edge). As the original diagram \(D\) was alternating, \(D_4\) can be seen to be alternating as well. Following either of the outer strands halfway yields the subknot for \([K]\); proceeding all the way along both outer strands yields the slipknot.
  \begin{figure}
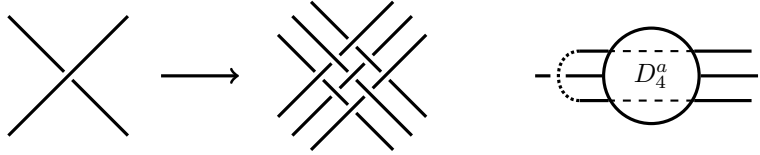

    \centering
    \includestandalone{figs/tang_tripling}
    \caption{A scheme for tripling a knot diagram of \([K]\) to introduce a slipknotted alternating 4-tangle. The external crossing is to be added to agree with the alternating crossing scheme.}
    \label{fig:dia_alttripling}
  \end{figure}
\end{proof}

With this then we can see,

\begin{theorem}
  \label{thm:arbslipknots}
  Let \([K]\) be any knot type, and \(\ArbClass\) one of the above classes of diagram (prime, reduced, or general; alternating or not; open or closed). Then there exists \(c > 0\) and \(1 > d > 0\) so that
  \[ \Prb(D \text{ contains \(\le c|D|\) copies of \([K]\) as a subknot}) < d^{|D|}, \]
  so that the probability that a random diagram in \(\ArbClass\) contains the knot type \([K]\) as a subknot and slipknot goes to one as the complexity of the diagram (\textit{i.e.}\ the number of vertices) goes to infinity.
\end{theorem}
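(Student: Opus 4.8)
The plan is to combine Lemma~\ref{lem:doublinglemma} with the relevant pattern theorem for the class $\ArbClass$. The underlying idea is that Lemma~\ref{lem:doublinglemma} manufactures, for the fixed knot type $[K]$, a single tangle whose every insertion plants one copy of $[K]$ as both a subknot and a slipknot; the pattern theorem of~\cite{Chapman2016} then guarantees that a random diagram in $\ArbClass$ contains a linear number of such insertions, with an exponentially small failure probability. Composing the two estimates gives exactly the stated bound, and the ``probability tends to one'' conclusion is then immediate from the exponential tail.

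First I would, given the class $\ArbClass$, select the tangle produced by Lemma~\ref{lem:doublinglemma} appropriate to that class: the reduced $2$-tangle $D_2$ when $\ArbClass$ is the reduced or general class, the prime $4$-tangle $D_4$ when $\ArbClass$ is prime, and the alternating $4$-tangle $D_4^a$ (which requires $[K]$ alternating) when $\ArbClass$ is alternating; the open versus closed variants are handled identically, since the insertions act locally, away from any legs. I would then check that the chosen tangle meets the hypotheses of the corresponding pattern theorem for $\ArbClass$: for the general class this is the requirement that the pattern be a suitable ($3$-edge-connected, single-component) $2$-tangle under which knot diagrams are closed; for the prime and alternating classes, the analogous primeness and alternating-compatibility conditions that the matching insertion constructions of~\cite{Chapman2016} are designed to satisfy. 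This verification is the point at which the two ingredients must mesh.

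Granting the hypotheses, the pattern theorem supplies constants $c > 0$ and $1 > d > 0$ with
\[ \Prb(D \text{ contains } \le c|D| \text{ copies of } T \text{ as a subtangle}) < d^{|D|}, \]
where $T$ is the selected tangle. By Lemma~\ref{lem:doublinglemma}, each inserted copy of $T$ contains $[K]$ as a subknot and, since the construction doubles a $[K]$-diagram so that the enclosing subdiagram closes trivially, as a slipknot; distinct (disjoint) copies of $T$ yield distinct copies of $[K]$. Hence the number of $[K]$-subknots in $D$ is at least the number of copies of $T$, giving the event inclusion
\[ \{D : D \text{ has } \le c|D| \text{ copies of } [K] \text{ as a subknot}\} \subseteq \{D : D \text{ has } \le c|D| \text{ copies of } T\}, \]
so the probability bound transfers verbatim. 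Since $d^{|D|} \to 0$, the complementary probability---that $D$ contains at least one, indeed a linear proportion of, copies of $[K]$ as a subknot and slipknot---tends to $1$.

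The main obstacle I anticipate is the \emph{hypothesis-matching} in the second step: the pattern theorem is really a family of statements, one per class $\ArbClass$, each with its own structural requirement on admissible patterns (edge-connectivity, primeness, or preservation of the alternating property under insertion). One must confirm that the specific tangles $D_2$, $D_4$, $D_4^a$ built in Lemma~\ref{lem:doublinglemma}---engineered precisely to keep the ambient diagram inside $\ArbClass$ after insertion---are legal patterns for the corresponding pattern theorem, and that the insertion operation (edge replacement for $D_2$, crossing replacement for $D_4$ and $D_4^a$) coincides with the connect-summation operation under which the relevant class is shown closed in~\cite{Chapman2016}. Once this compatibility is established, the remainder is a formal consequence of the exponential tail bound together with the subknot/slipknot inclusion above.
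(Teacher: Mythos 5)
Your proposal is correct and matches the paper's proof, which is exactly this one-line combination of the pattern theorem from~\cite{Chapman2016} with the class-appropriate tangle of Lemma~\ref{lem:doublinglemma}; the hypothesis-matching and event-inclusion details you spell out are left implicit in the paper but are the intended argument. The only point worth noting is the one you already flagged: for the alternating classes the construction requires \([K]\) alternating, a caveat the paper handles by restricting its conclusions for alternating diagrams accordingly.
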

\begin{proof}
  The result is a direct consequence of the pattern theorem, together with the construction for the appropriate tangle of Lemma~\ref{lem:doublinglemma}. We provide an example (Figure~\ref{fig:trefslipknot}) for ``edge replacement'' (connect summation) which shows that trefoil slipknots are common in general and reduced diagrams.
  \begin{figure}[hbtp]
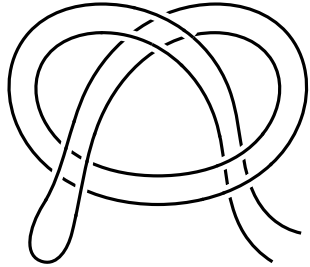

    \centering
    \includestandalone{figs/3_1_double}
    \caption{A tangle \(T\) which shows that trefoil slipknots are
      common in certain diagram classes.}
    \label{fig:trefslipknot}
  \end{figure}
\end{proof}

In particular, as every prime (reduced) alternating diagram is minimal, this answers in the affirmative the first and second conjectures of Millett and Jablan~\cite{Millett_2016,Millett_2017} in the case of minimal prime \emph{alternating} knot diagrams.

\subsection{In Unknots}
\label{sec:unknots}

As we are considering \emph{slipknotting} rather than knotting proper, we are able instead to consider classes of diagrams with \emph{fixed knot type} \([K]\)! Consider the class of knot diagrams representing the unknot. Indeed, any tangle \(T\) whose inclusion in a diagram \(D\) would be sufficient to show that \(D\) would be knotted would be immediately excluded from a pattern theorem for unknot diagrams! Tangles which insert slipknots \emph{but do not introduce any new knot type factors} however, namely those constructed in Lemma~\ref{lem:doublinglemma}, may indeed be admissible. Hence, we can show,
\begin{theorem}
  \label{thm:unkslipknots}
  Let \([K]\) be any knot type, and \(\ArbClass\) the class of either general, prime, or reduced unknot diagrams. Then there exists \(c > 0\) and \(1 > d > 0\) so that
  \[ \Prb(D \text{ contains \(\le c|D|\) copies of \([K]\) as a
      subknot}) < d^{|D|}, \]
  so that the probability that a random diagram in \(\ArbClass\) contains the knot type \([K]\) as a subknot goes to one as the complexity of the diagram (\textit{i.e.}\ the number of vertices) goes to infinity.
\end{theorem}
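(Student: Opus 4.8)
The plan is to mimic the proof of Theorem~\ref{thm:arbslipknots}, but to carry it out \emph{inside} the restricted class of unknot diagrams. The single new fact that must be verified—and it is precisely the point highlighted in the discussion above—is that the slipknot tangles \(D_2\) and \(D_4\) of Lemma~\ref{lem:doublinglemma} preserve the \emph{overall} knot type of any diagram into which they are inserted. This is what makes the unknot class admissible as a setting for a pattern theorem: a tangle that forced genuine knotting would produce diagrams outside the class and could never appear in a pattern theorem for unknots, whereas a slipknot tangle introduces \([K]\) only ephemerally and leaves the ambient type trivial.

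First I would record the triviality of the insertion. Recall that \(D_2\) is built by doubling a prime diagram \(D\) of \([K]\) into a ribbon and joining one adjacent pair of legs, leaving the two remaining legs as the ends of the resulting \(2\)-tangle. Tracing the tangle strand from one leg to the other, one runs \emph{out} along one side of the ribbon—reading off a copy of \([K]\) at the halfway point—and then \emph{back} along the other side. Because the ribbon is an embedded annulus, this out-and-back arc is isotopic rel endpoints to a single trivial arc; equivalently, the closure of \(D_2\) obtained by joining its two legs is the unknot. Consequently, replacing an edge \(e\) of a diagram \(D\) by \(D_2\) is, at the level of knot type, a connected sum with this trivial closure, so \([D]\) is unchanged. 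In particular, if \(D\) is an unknot diagram then so is the diagram obtained by inserting \(D_2\), and the class of unknot diagrams is closed under this insertion. The same argument applies to the prime \(4\)-tangle \(D_4\) under crossing replacement, since by construction its insertion realizes a connected sum of the (trivially closing) \(D_2\) portion.

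With closure in hand, the remaining hypotheses—connectivity and single-componentness of the pattern—are exactly those already verified in Lemma~\ref{lem:doublinglemma} and used in the proof of Theorem~\ref{thm:arbslipknots}: \(D_2\) is a reduced \(2\)-tangle of one link component suitable for the general and reduced unknot classes, and \(D_4\) is a prime \(4\)-tangle suitable for the prime unknot class. I would then invoke the appropriate pattern theorem from the framework of~\cite{Chapman2016}, now applied to the subclass \(\ArbClass\) of unknot diagrams, to obtain constants \(c > 0\) and \(1 > d > 0\) with
\[ \Prb(D \text{ contains } \le c|D| \text{ copies of the pattern}) < d^{|D|}. \]
Since each inserted copy of the pattern contains, by the halfway-trace observation of Lemma~\ref{lem:doublinglemma}, a knotoid subdiagram of open type \([K]\), every such copy witnesses \([K]\) as a subknot of \(D\) (and, because the full copy closes trivially, as a slipknot). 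Hence asymptotically almost surely an unknot diagram in \(\ArbClass\) contains \([K]\), as claimed.

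The main obstacle is the implicit requirement in the third step: that the pattern-theorem machinery of~\cite{Chapman2016}—the supermultiplicative growth estimates and Kesten-type attrition bounds—genuinely applies to the restricted, \emph{knot-type-fixed} class of unknot diagrams rather than only to the full classes of Section~\ref{sec:general}. The triviality computation above is what guarantees that insertion keeps us inside this subclass, so that connect-summing witnesses supermultiplicativity and a connective constant exists; the delicate point is to confirm that this subclass is not asymptotically negligible and still possesses the insertion and counting structure the attrition argument demands, so that diagrams avoiding the pattern are exponentially rare within the unknot class itself.
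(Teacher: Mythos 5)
Your proposal is correct and takes essentially the same route as the paper: there, Theorem~\ref{thm:unkslipknots} is likewise reduced to a pattern theorem for the unknot classes, the tangles of Lemma~\ref{lem:doublinglemma} are noted to be admissible precisely because their insertion does not change the knot type, and the one remaining hypothesis is supplied as a standalone proposition---the existence of the growth rate \(\lim_{n\to\infty}{\arbclass_n^{1/n}}\) for unknot diagram classes---proved exactly by your observation that a knot-type-respecting connect-sum composition gives supermultiplicativity \(\arbclass_n\arbclass_m \le \arbclass_{n+f(m)}\), plus Fekete's lemma. One clarification: your closing worry that the unknot class must not be ``asymptotically negligible'' is a red herring, since the pattern theorem is a statement internal to the class and needs only the class's own connective constant together with the in-class insertions you verified, not any comparison with the full class of all diagrams.
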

\begin{proof}
  Once we show the pattern theorem for these classes of unknot diagrams, we would have the result by the proof of Theorem~\ref{thm:arbslipknots}.
\end{proof}
Notice that this theorem answers Conjectures 2.12 and 2.19 from~\cite{Millett2010} in the affirmative for random unknot diagrams, and highlights an important property of the diagram model; attachments of patterns can be \emph{strongly} local (e.g.\ connect summation of a 2-tangle to an edge doesn't affect any other edges in a diagram). The expected attachment operations mentioned in the proof of Theorem~\ref{thm:arbslipknots} above can be seen easily to be viable for unknots (provided tangles whose insertion does not change the knot type of a diagram). Hence all that is required to prove a pattern theorem for unknot diagrams (cf.~\cite{Chapman2016}) is;
\begin{proposition}
  Let \(\ArbClass\) be the class of either general, prime, or reduced
  unknot diagrams, and let \(\arbclass_n\) be the count of
  \(n\)-crossing diagrams in \(\ArbClass\). Then the limit
  \[
    \lim_{n\to\infty}{\arbclass_n^{1/n}}
  \]
  exists and is equal to \(\lim_{n\to\infty}{\arbclass_n^{1/n}} < \infty\).
\end{proposition}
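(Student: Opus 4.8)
The plan is to prove existence by Fekete's lemma, exactly as in the proof of Theorem~\ref{thm:knotoidgrowth}. I would show that the counts $\arbclass_n$ are supermultiplicative (up to a subexponential rooting correction) and that $\limsup_{n\to\infty}\arbclass_n^{1/n}$ is finite; together these force the limit to exist, to equal $\sup_n \arbclass_n^{1/n}$, and to be finite. The two ingredients are thus a finite upper bound on the growth rate and a composition operation that stays inside the unknot class.

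Finiteness of the $\limsup$ comes from a crude covering bound. Every diagram in $\ArbClass$ is in particular a knot diagram, hence a link diagram, and every $n$-crossing diagram is obtained from an $n$-crossing shadow by one of $2^n$ crossing-sign assignments. Since the number of $n$-crossing link shadows is $\ell_n \sim \tfrac{2}{\sqrt{\pi}}12^n n^{-5/2}$, we get $\arbclass_n \le 2^n \ell_n$, so $\limsup_{n\to\infty}\arbclass_n^{1/n} \le 2\cdot 12 = 24 < \infty$.

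For supermultiplicativity I would, in each class, build an injection $\ArbClass_n \times \ArbClass_m \hookrightarrow \ArbClass_{n+m}$ from a connect-sum-type composition. For the general and reduced classes the map is ordinary connect summation: after rooting each diagram at a distinguished flag (which changes the counts by a factor of at most $4n$, since an $n$-crossing diagram has $4n$ flags, so $\arbclass_n \le \hat{\arbclass}_n \le 4n\,\arbclass_n$ and the rooted and unrooted sequences share the same exponential growth rate), I cut the root edges of $D_n$ and $D_m$ and splice the two resulting $2$-tangles in series. As in Theorem~\ref{thm:knotoidgrowth}, the edges created by the splice form a distinguished cut that is recoverable from the root data, so the composition is reversible and $\hat{\arbclass}_{n+m}\ge \hat{\arbclass}_n\hat{\arbclass}_m$. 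The composite stays in the class: the splice of two one-component diagrams is again a knot diagram, its knot type is $[D_n]\#[D_m]=0_1\#0_1=0_1$ so it is again an unknot diagram, and connect summation introduces no nugatory crossing, so reducedness is preserved.

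The main obstacle is the prime class, since connect summation is never prime: the splicing circle meets the composite transversally in two points with crossings on both sides, exactly the configuration that primeness forbids. To repair this I would replace the bare connect sum by the prime-preserving composition of Lemma~\ref{lem:doublinglemma} and~\cite{Chapman2016}, linking the two spliced $2$-tangles with an additional over-passing strand that renders the composite prime while still acting as a connected sum at the level of knot type. Because this modified composition remains additive on knot type, it carries pairs of prime unknot diagrams to prime unknot diagrams, and the extra over-strand can be tracked to keep the map injective, again giving $\hat{\arbclass}_{n+m}\ge \hat{\arbclass}_n\hat{\arbclass}_m$. Verifying simultaneously that this single construction is prime, unknotted, and reversible is the delicate point. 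With supermultiplicativity established in all three cases and the $\limsup$ finite, Fekete's lemma applied to $\hat{\arbclass}_n$ and the growth-rate transfer above yield that $\lim_{n\to\infty}\arbclass_n^{1/n}=\sup_n\arbclass_n^{1/n}<\infty$, as claimed.
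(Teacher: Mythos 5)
Your proposal is correct and takes essentially the same route as the paper: the paper's proof likewise establishes supermultiplicativity by redefining the composition constructions of~\cite{Chapman2016} so that the composite's knot type is the connect sum of its components' knot types (hence unknot diagrams compose to unknot diagrams) and concludes via Fekete's lemma, with you merely making explicit the finiteness bound \(\arbclass_n \le 2^n\ell_n\), the rooting correction, and the traversal-based injectivity. One bookkeeping note: for the prime class the paper records the extra crossings of the prime-preserving splice as \(\arbclass_n\arbclass_m \le \arbclass_{n+m+2}\) (its \(f(m)=m+2\)) rather than your exact \(\hat{\arbclass}_{n+m}\ge\hat{\arbclass}_n\hat{\arbclass}_m\), but this constant index shift is harmless for the Fekete-type argument.
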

\begin{proof}
  The proof is similar to those in~\cite{Chapman2016}. Either of the composition constructions can be redefined for diagrams so that they only produce diagrams whose knot type \emph{is the connect sum of its components' knot types}; hence unknot diagrams only produce unknots. This provides the super-multiplicativity hypothesis that \(\arbclass_n\arbclass_m \le \arbclass_{n+f(m)}\), where \(f(m)\) is either \(m\) or \(m+2\), and proves the claim.
\end{proof}

\subsection{In Other Fixed Knot Types}
\label{sec:fixedtype}

In fact, it is this observation which shows why the proof strategy \emph{does not} immediately apply in general to classes \(\ArbClass\) of diagrams of some \emph{other}, nontrivial knot type \([K]\). Indeed, to apply the theorems in~\cite{Chapman2016} one would have to provide a composition that fixes knot type. This trouble mirrors other models of random knots, namely self avoiding polygons: The ``growth rates'' of all self avoiding polygons and those which represent the unknot are known to exist. However the following is yet unknown: For any fixed knot type \([K]\), is the growth rate of polygons representing \([K]\) equal to that of the unknot? Such results have been shown by restricting to very specific subclasses of self avoiding polygons, but are not known in the general case.

We do have a weak result for such classes:
\begin{theorem}
  Let \([L]\) and \([K]\) be any two knot types, and \(\ArbClass[[L]]\) the class of either general, prime, or reduced diagrams of knot type \([L]\). Then there exists \(c > 0\) so that the following holds: Let \(\mathscr{H}_n\) be the subclass of diagrams in \(\ArbClass[[L]]_n\) who contain fewer than \( cn \) copies of \([K]\) as a subknot. Then
  \[ \limsup_{n\to\infty}{|\mathscr{H}_n|^{1/n}} <
    \limsup_{n\to\infty}{|\ArbClass[[L]]_n|^{1/n}}. \]
\end{theorem}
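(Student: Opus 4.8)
The plan is to realize the slipknot tangle of Lemma~\ref{lem:doublinglemma} as a pattern that can be inserted into a type-$[L]$ diagram \emph{without} changing its knot type, and then to run a Kesten-style insertion count that survives the absence of a limiting growth rate. Write $\ArbClass^{[L]}$ for the relevant class (general, prime, or reduced), $a_n = |\ArbClass^{[L]}_n|$, $h_n = |\mathscr{H}_n|$, and $\mu = \limsup_n a_n^{1/n}$; since $\ArbClass^{[L]}_n$ is a subclass of all $n$-crossing knot diagrams, $\mu < \infty$. Let $P$ be the tangle $D_2$ (for the general and reduced classes, inserted by edge replacement) or $D_4$ (for the prime class, inserted by crossing replacement), with $k := |P|$ fixed. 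The decisive property is that $P$ is a slipknot, so its closure is the trivial knot; hence inserting $P$ connect-sums a trivial factor and leaves the global type equal to $[L]\# 0_1 = [L]$, while introducing at least one new copy of $[K]$ as a subknot. Thus $\ArbClass^{[L]}$ is closed under $P$-insertion, exactly as in the unknot Proposition, and the choice of tangle keeps the diagram inside the same subclass (in particular $D_4$ preserves primeness).

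First I would fix $c>0$ small and consider $D\in\mathscr{H}_n$, i.e. $D$ carrying fewer than $cn$ copies of $[K]$ as a subknot; as each such copy occupies only a bounded number $O(k)$ of edges, $D$ has more than $n$ edges meeting no copy of $P$ once $c$ is below a fixed threshold. I would then build the map
\[
(D,T)\longmapsto D', \qquad |T| = \lfloor\alpha n\rfloor,
\]
where $T$ is a subset of the $P$-free edges and $D'$ is obtained by inserting a copy of $P$ at each edge of $T$. By the closure property $D'\in\ArbClass^{[L]}_{m}$ with $m = n + k\lfloor\alpha n\rfloor$. The domain has size at least $h_n\binom{n}{\lfloor\alpha n\rfloor}$, while any given $D'$ has at most $\binom{(c+C\alpha)n}{\lfloor\alpha n\rfloor}$ preimages, since to invert one need only select which $\lfloor\alpha n\rfloor$ of the at most $(c+C\alpha)n$ copies of $P$ in $D'$ are excised. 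Counting both ways yields
\[
h_n\binom{n}{\lfloor\alpha n\rfloor} \;\le\; \binom{(c+C\alpha)n}{\lfloor\alpha n\rfloor}\, a_{\,n + k\lfloor\alpha n\rfloor}.
\]

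To finish I would extract growth rates. Using $a_{\,n+k\lfloor\alpha n\rfloor} \le \mu^{(1+\alpha k)n\,(1+o(1))}$ — which needs only that $\mu$ is the $\limsup$, not a genuine limit — and Stirling's formula for the two binomial coefficients, the bound above gives
\[
\limsup_{n\to\infty} h_n^{1/n} \;\le\; \mu\cdot \exp\!\big(E(\alpha)\big), \qquad E(\alpha) = (c+C\alpha)\,H\!\Big(\tfrac{\alpha}{c+C\alpha}\Big) - H(\alpha) + \alpha k\ln\mu,
\]
where $H$ is the binary entropy. Since $E(0)=0$ and a short expansion gives $E(\alpha) \sim \alpha(\ln c + k\ln\mu)$ as $\alpha\to 0^{+}$, choosing $c < \mu^{-k}$ makes $E(\alpha)<0$ for all sufficiently small $\alpha$, and the theorem follows with this $c$.

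The main obstacle is conceptual rather than computational, and it is the very reason this is only a \emph{weak} result: for a fixed nontrivial $[L]$ the composition constructions change knot type, so there is no supermultiplicativity and the limit $\lim_n a_n^{1/n}$ is not known to exist. Consequently one cannot quote the pattern theorem of~\cite{Chapman2016} nor upgrade the conclusion to the almost-sure $d^{|D|}$ form of Theorems~\ref{thm:arbslipknots} and~\ref{thm:unkslipknots}; the entire argument must be phrased at the level of $\limsup$. The technical heart is then the multiplicity control in the display above: I must ensure that the entropy $\binom{n}{\lfloor\alpha n\rfloor}$ of choosing where to insert strictly dominates the cost $\binom{(c+C\alpha)n}{\lfloor\alpha n\rfloor}$ of re-identifying the inserted copies when inverting the map, and it is precisely this comparison that forces the quantitative smallness $c<\mu^{-k}$ and, happily, never appeals to the existence of the limit.
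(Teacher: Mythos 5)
Your proposal is correct and follows essentially the same route as the paper: the paper's proof is a one-line appeal to the unknot argument---slipknot tangles from Lemma~\ref{lem:doublinglemma} can be inserted without changing the knot type, and the absence of a known growth limit for \(\ArbClass[[L]]\) is exactly why only the \(\limsup\) (weak) conclusion is available---which in turn rests on the Kesten-style insertion-and-counting machinery of~\cite{Chapman2016} that you have simply unfolded explicitly. Your entropy computation, the preimage bound via the \((c+C\alpha)n\) cap on copies of \(P\), and your closing diagnosis of why supermultiplicativity fails for nontrivial \([L]\) all match the substance of what the paper delegates to its citation.
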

\begin{proof}
  The argument here is the same as that for unknot diagrams above: Tangle diagrams which insert slipknots of type \([K]\) can be freely inserted (by methods like connect summation) without changing the knot type of a diagram. We are left with the weak result because we do not know enough about the growth rate of the class of diagrams \(\ArbClass[[L]]\).
\end{proof}

We do conjecture based on numerical evidence and connections to random self-avoiding polygons that:
\begin{conjecture}
  Let \([L]\) be a fixed knot type and \(\ArbClass[[L]]\) be the class of either general, prime, or reduced diagrams of fixed knot type \([L]\), and let \(\arbclass_n[[L]]\) be the count of \(n\)-crossing diagrams in \(\ArbClass[[L]]\). Then the limit
  \[
    \lim_{n\to\infty}{\arbclass_n[[L]]^{1/n}}
  \]
  exists.
\end{conjecture}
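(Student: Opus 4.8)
The plan is to imitate the unknot argument of the preceding Proposition, where existence followed from Fekete's lemma once a knot-type-preserving composition was supplied. Writing \(a_n := \arbclass_n[[L]]\) and \(\mu_0 := \lim_{n\to\infty}\arbclass_n[0_1]^{1/n}\) for the unknot-diagram connective constant (which exists by the Proposition), I expect the limit to exist and in fact to equal \(\mu_0\), by analogy with self-avoiding polygons, where fixing the knot type removes the exponential ``knotting entropy'' and collapses the growth rate to that of the unknot. The strategy is therefore to sandwich \(a_n^{1/n}\) between a lower bound and an upper bound that both tend to \(\mu_0\).

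The lower bound \(\liminf_{n\to\infty} a_n^{1/n} \ge \mu_0\) is within reach. The obstruction to the naive approach is that the connect-sum composition of the Proposition sends a pair of type-\([L]\) diagrams to a diagram of type \([L]\#[L] \ne [L]\), giving no supermultiplicative relation for \(a_n\) itself. However, connect-summing an \emph{unknot} diagram into a type-\([L]\) diagram preserves the type, since \([L]\# 0_1 = [L]\), and the composite retains a distinguished disconnecting edge from which both factors are recovered. Carrying this out with the same class-preserving construction as in the Proposition yields a reversible map \(\ArbClass[[L]]_n \times \ArbClass[0_1]_m \hookrightarrow \ArbClass[[L]]_{n+f(m)}\), with \(f(m)\in\{m,m+2\}\), and hence
\[ a_n \,\arbclass_m[0_1] \le a_{n+f(m)}. \]
Fixing any \(n_0\) with \(a_{n_0}>0\) and iterating gives \(a_{n_0+k f(m)} \ge a_{n_0}\,\arbclass_m[0_1]^{\,k}\); letting \(k\to\infty\) and then \(m\to\infty\) (so that \(f(m)/m\to 1\)) recovers \(\liminf_{n\to\infty} a_n^{1/n} \ge \mu_0\) by a routine generalized Fekete argument for a sequence that is supermultiplicative over the unknot counts.

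The matching upper bound \(\limsup_{n\to\infty} a_n^{1/n} \le \mu_0\) is the crux, and it is where the argument is genuinely open. What one wants is a nearly injective ``unknotting'' map sending a type-\([L]\) diagram to an unknot diagram of comparable size together with bounded auxiliary data, so that \(a_n \le C\,\arbclass_{n-O(1)}[0_1]\) up to subexponential factors. The natural attempt is to fix a minimal diagram of \([L]\), show that a positive density of large type-\([L]\) diagrams contains a removable copy of it as a connect-sum factor (the ``weak result'' Theorem above already locates linearly many trivializable subknots), excise it, and thereby land among unknot diagrams with an invertible record of what was removed.

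The main obstacle is precisely this excision step. For nontrivial \([L]\) the knotting cannot in general be confined to a bounded-size subtangle in a canonical and reversible way: a diagram may realize \([L]\) without containing any clean \(2\)-tangle factor equal to a minimal \([L]\)-diagram, the knotting may be spread diffusely across the diagram, and the prime decomposition of \([L]\) need not be reflected by any geometric decomposition of the diagram. This is the diagrammatic analogue of the long-standing open problem for fixed-knot-type self-avoiding polygons, resolved only for special subclasses, which is why the statement is posed here as a conjecture rather than a theorem.
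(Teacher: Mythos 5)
You have correctly diagnosed the situation: the statement is posed in the paper as a \emph{conjecture}, and the paper offers no proof of it --- it is motivated only by numerical evidence and by the analogy with fixed-knot-type self-avoiding polygons, where the corresponding growth-rate question remains open outside special subclasses. Your proposal honestly stops short of claiming a proof, and the obstruction you isolate (no canonical, reversible way to excise the knotting of a nontrivial type \([L]\) into a bounded subtangle, hence no upper bound \(\limsup_n a_n^{1/n} \le \mu_0\)) is precisely the difficulty the paper itself names when it observes that one ``would have to provide a composition that fixes knot type'' and compares the problem to the unresolved fixed-knot-type question for self-avoiding polygons. So there is nothing to compare against on the paper's side: your reading of why this is a conjecture rather than a theorem matches the paper's own discussion.

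Your partial lower bound is sound in outline, and it is essentially the mechanism the paper deploys for its adjacent \emph{weak} theorem: connect-summing an unknot diagram (or unknotted tangle) into a type-\([L]\) diagram preserves the type since \([L]\#0_1=[L]\), and the rooted composition of the unknot Proposition makes the map reversible via the distinguished disconnecting edge. Two points deserve care if you develop this. First, the iteration \(a_{n_0+kf(m)} \ge a_{n_0}\,u_m^{\,k}\) (with \(u_m\) the unknot counts) only controls the \(\liminf\) along arithmetic progressions of step \(f(m)\); you need \(a_n>0\) for all residues at large \(n\), which follows from Reidemeister moves that preserve both the diagram class and the knot type (a Reidemeister~II insertion adds two crossings and keeps a reduced diagram reduced; odd sizes require one more small type-preserving modification). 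Second, be aware that even a proven \(\liminf_n a_n^{1/n} \ge \mu_0\) would not by itself constitute partial progress toward the conjectured \emph{existence} of the limit: without the matching upper bound, oscillation of \(a_n^{1/n}\) is not excluded, so your sandwich framing is a program contingent entirely on the excision step you correctly flag as open.
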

If the above were true, then we would get the ``strong'' pattern theorem
result:
\begin{conjecture}
  Let \([L]\) and \([K]\) be any two knot types, and
  \(\ArbClass[[L]]\) the class of either general, prime, or reduced
  diagrams of knot type \([L]\). Then there exists \(c > 0\) and \(0 <
  d < 1\) so that
  \[ \Prb(D \text{ contains \(\le c|D|\) copies of \([K]\) as a
      subknot}) < d^{|D|}, \]
  so that the probability that a random diagram in \(\ArbClass[[L]]\)
  contains the knot type \([K]\) as a subknot goes to one as the
  complexity of the diagram (\textit{i.e.}\ the number of vertices) goes to
  infinity.
\end{conjecture}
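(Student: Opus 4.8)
The plan is to run the argument of the weak theorem immediately above essentially verbatim, invoking the preceding conjecture only to upgrade a \(\limsup\) comparison into an honest exponentially small probability. First I would apply that weak theorem: via the knot-type-preserving slipknot tangles of Lemma~\ref{lem:doublinglemma}, inserted by connect summation so that a diagram of type \([L]\) remains of type \([L]\), it already furnishes a constant \(c > 0\) and the subclass \(\mathscr{H}_n \subseteq \ArbClass[[L]]_n\) of diagrams carrying fewer than \(cn\) copies of \([K]\) as a subknot, together with the strict inequality
\[ \limsup_{n\to\infty} |\mathscr{H}_n|^{1/n} < \limsup_{n\to\infty} |\ArbClass[[L]]_n|^{1/n}. \]
Write \(\mu_H\) for the left-hand side.

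Next I would assume the preceding conjecture, so that the right-hand \(\limsup\) is a genuine limit \(\mu_{[L]} = \lim_{n\to\infty} |\ArbClass[[L]]_n|^{1/n}\) with \(\mu_H < \mu_{[L]}\). Choosing \(\varepsilon > 0\) small enough that \(\mu_H + \varepsilon < \mu_{[L]} - \varepsilon\), the definitions of \(\limsup\) and of the limit give, for all sufficiently large \(n\), both \(|\mathscr{H}_n| \le (\mu_H + \varepsilon)^n\) and \(|\ArbClass[[L]]_n| \ge (\mu_{[L]} - \varepsilon)^n\). Hence
\[ \Prb\bigl(D \text{ contains } \le c|D| \text{ copies of } [K]\bigr) = \frac{|\mathscr{H}_n|}{|\ArbClass[[L]]_n|} \le \left(\frac{\mu_H + \varepsilon}{\mu_{[L]} - \varepsilon}\right)^{n}, \]
and taking \(d\) to be this base, adjusted upward but kept below \(1\) to absorb the finitely many small \(n\), yields \(0 < d < 1\) with \(\Prb(\cdots) < d^{|D|}\). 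This is exactly the claimed bound, and in particular it sends the probability that a random diagram of type \([L]\) contains \([K]\) as a subknot to \(1\).

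The only ingredient beyond the already-proved weak theorem is the passage from \(\limsup\) to \(\lim\) in the denominator, and this is precisely where everything rests on the preceding conjecture; so the hard part — and the reason the statement is posed as a conjecture rather than a theorem — is establishing that \(\lim_{n\to\infty} |\ArbClass[[L]]_n|^{1/n}\) exists at all. The Fekete/supermultiplicativity scheme that succeeds for the unknot class, where the connect sum of two unknot diagrams is again an unknot diagram, breaks down here, since connect-summing two diagrams of type \([L]\) produces a diagram of type \([L] \# [L] \neq [L]\). Circumventing this would require a composition operation that glues two diagrams while preserving a single fixed knot type \([L]\) (so as to recover supermultiplicativity within \(\ArbClass[[L]]\)), or else a direct two-sided estimate on \(\arbclass_n[[L]]\); this mirrors the still-open analogous question for self-avoiding polygons of fixed knot type, and I expect it to be the main obstacle.
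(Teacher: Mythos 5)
The paper offers no proof of this statement---it is posed explicitly as a conjecture, with only the remark that the ``strong'' result \emph{would follow} if the preceding conjecture (existence of \(\lim_{n\to\infty}\arbclass_n[[L]]^{1/n}\)) were true---and your conditional derivation (the weak theorem's strict \(\limsup\) gap combined with existence of the limit in the denominator, giving an eventually exponential ratio \(|\mathscr{H}_n|/|\ArbClass[[L]]_n|\)) is a correct and explicit realization of exactly that implication, matching the paper's framing. You also identify precisely the obstruction the paper identifies---connect summation sends type \([L]\) to \([L]\#[L]\), so the Fekete supermultiplicativity scheme fails and the limit's existence remains open, as for self-avoiding polygons of fixed knot type---with the lone caveat that, as in the paper's own pattern-theorem statements, the bound \(\Prb(\cdot) < d^{|D|}\) can only hold for sufficiently large \(|D|\) (for very small diagrams the probability in question equals \(1\)), so your ``absorb the finitely many small \(n\) by adjusting \(d\)'' should be read as the implicit asymptotic convention rather than a literal fix.
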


Indeed, the belief is that not only do the limits
\[ \lim_{n\to\infty}{\arbclass_n[[L]]^{1/n}} \]
exist, but that for any two knot types \([K], [L]\), the limits are the same, \textit{i.e.},
\[ \lim_{n\to\infty}{\arbclass_n[[K]]^{1/n}} = \lim_{n\to\infty}{\arbclass_n[[L]]^{1/n}}. \]
This is the belief for other models of random models of knotting, namely self avoiding polygons~\cite{Rensburg_1991,Deguchi_1997,Orlandini1998,Rensburg2011}, where there is extensive numerical evidence.

\subsection{In Classes of Other Types}
\label{sec:classes-other-types}

Rather than partitioning diagrams along ``knot invariant'' lines (such as knot type), we can partition along ``diagram invariant'' lines. One such example is the \emph{unknotting number} of a diagram \(D\), which is the minimum number \(\Unk(D)\) of crossings of a fixed diagram that need to be toggled in order to change a given diagram to the unknot. This is related to the usual definition of the unknotting number of a knot type \([K]\) by,
\[ \Unk([K]) = \min_D{\Unk(D)}, \]
where the minimum is over all diagrams \(D\) which represent the knot type \([K]\).

The unknotting number is biologically relevant: Topoisomerase enzymes are tasked with untangling DNA in cells~\cite{Buck04} among other duties. Type-2 topoisomerases work locally on DNA molecules by passing an upper strand of DNA through another; diagrammatically this process is identical to a crossing toggle. In this context, crossing toggles are called \emph{strand passages}, and the unknotting number of a knot type is its distance from \(0_1\) under the \emph{strand passage metric}~\cite{Darcy1997,Soteros2011}.

\begin{theorem}
  Let \(c_n(\ell)\) be the count of \(n\)-crossing (reduced, prime, or general) knot
  diagrams with unknotting number \(\ell\). Then
  \(\lim_{n\to\infty}{(c_n(\ell))^{1/n}} = \tau_0\), the growth constant of
  counts of (prime, reduced, or general) unknot diagrams (\(\tau_0\) depends on the class of diagrams).
\end{theorem}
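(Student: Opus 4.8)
The plan is to fix one of the three classes $\ArbClass \in \{\text{prime}, \text{reduced}, \text{general}\}$ and sandwich the growth rate of $c_n(\ell)$ between two matching bounds, both anchored to $\tau_0 = \lim_{n\to\infty}(c_n(0))^{1/n}$, whose existence in each class is guaranteed by the Proposition above. The key structural observation I would exploit is that a crossing toggle changes neither the underlying shadow nor the number of crossings, so it carries an $n$-crossing diagram of unknotting number $\ell$ to an $n$-crossing diagram of the same size and the same class---primeness and reducedness being properties of the shadow (the $4$-valent map underlying $\sigma\tau$) rather than of the sign decoration carried by $\sigma$.

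First I would prove the upper bound $\limsup_{n\to\infty}(c_n(\ell))^{1/n} \le \tau_0$ by a counting injection. Given $D$ with $\Unk(D) = \ell$, fix (say, lexicographically) a witnessing set $S_D$ of $\ell$ crossings whose simultaneous toggle produces an unknot diagram, and send $D \mapsto (\Togg_{S_D}(D), S_D)$. Since toggling is an involution on each crossing, $D$ is recovered from the pair, so the map is injective; its image lies among pairs consisting of an $n$-crossing unknot diagram in $\ArbClass$ together with an $\ell$-element subset of its $n$ crossings. Hence $c_n(\ell) \le \binom{n}{\ell}\, c_n(0)$, and taking $n$-th roots gives the bound because $\binom{n}{\ell}^{1/n} \to 1$ for fixed $\ell$.

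For the lower bound I would produce many diagrams of unknotting number \emph{exactly} $\ell$ by inserting a fixed gadget into unknot diagrams. Let $C_\ell$ be the standard alternating diagram of the torus knot $T(2,2\ell+1)$: it has $k = 2\ell+1$ crossings, represents a knot type of unknotting number $\ell$, and toggling $\ell$ of its crossings yields an unknot diagram. Using the in-class composition from the Proposition (connect summation by edge replacement for the reduced and general classes, and the prime-preserving crossing replacement for the prime class), compose each unknot diagram $U$ of $n-k$ crossings with $C_\ell$ to obtain a diagram $D$ of $n$ crossings in $\ArbClass$. Because these compositions realize knot type as the connect sum of the factors, $[D]$ is the connect sum of the unknot with $T(2,2\ell+1)$, namely $T(2,2\ell+1)$ itself, so $\Unk(D) \ge \Unk([D]) = \ell$. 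Conversely, toggling the $\ell$ designated crossings inside the $C_\ell$ factor turns that factor into an unknot diagram and hence turns $D$ into a composition of two unknot diagrams, which is an unknot diagram; thus $\Unk(D) \le \ell$, and therefore $\Unk(D) = \ell$. Since these compositions are reversible---the gadget sits behind a canonical disconnecting tangle, exactly as in the super-multiplicativity arguments of the growth-constant proofs---the map $U \mapsto D$ is injective, giving $c_n(\ell) \ge c_{n-k}(0)$ and hence $\liminf_{n\to\infty}(c_n(\ell))^{1/n} \ge \tau_0$. Combining the two bounds yields the claim.

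The hard part will be the lower-bound construction, and specifically pinning the unknotting number to be exactly $\ell$ within each class. The inequality $\Unk(D) \ge \ell$ rests on the classical computation $\Unk(T(2,2\ell+1)) = \ell$ (from the signature or slice--genus bound), while $\Unk(D) \le \ell$ requires checking that the designated toggles of the $C_\ell$ factor genuinely reduce the \emph{whole} composite to an unknot diagram---this is precisely where the feature that the composition preserves ``knot type $=$ connect sum of factors'' is essential, rather than any additivity of unknotting number itself. The remaining care is bookkeeping: ensuring the prime case uses the crossing-replacement gadget so that $D$ stays prime, and that the composition is reversible so the lower-bound map is a genuine injection.
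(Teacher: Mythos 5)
Your proposal is correct, and while your lower bound is exactly the paper's, your upper bound takes a genuinely different route. The paper never writes down your binomial injection: it instead proves the upper direction by induction on \(\ell\), introducing the subsets \(\mathscr{C}_n^{\pm}(\ell)\) of \emph{rooted} diagrams whose root-crossing toggle raises or lowers the unknotting number, using the root-toggle bijection \(\mathscr{C}_n^{+}(\ell) \cong \mathscr{C}_n^{-}(\ell+1)\), and observing that for \(\ell > 0\) at least one crossing must lower \(\Unk\), so \(c_n(\ell) \le n\, c_n^{-}(\ell)\) and the growth rates of \(c_n^-(\ell)\) and \(c_n(\ell)\) agree; chaining these through the connect-sum lower bound closes the induction. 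Your single injection \(D \mapsto (\Togg_{S_D}(D), S_D)\), giving \(c_n(\ell) \le \binom{n}{\ell}\, c_n(0)\), collapses that entire induction into one step and is arguably cleaner: injectivity requires no canonical choice of \(S_D\) (any function \(D \mapsto S_D\) works, since the pair determines \(D = \Togg_{S_D}(\Togg_{S_D}(D))\)), and your observation that primeness and reducedness are shadow-level properties, untouched by sign toggles, is precisely what keeps the image in the right class. On the lower bound, both you and the paper inject unknot diagrams into \(\mathscr{C}_{n+(2\ell+1)}(\ell)\) by composing with the standard \((2,2\ell+1)\)-torus diagram; here you are in two respects more careful than the paper. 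First, the paper's proof quietly restricts to ``reduced or general'' diagrams even though the statement includes the prime class, whereas you route the prime case through the prime-preserving crossing-replacement composition. Second, the paper justifies that the composite has unknotting number exactly \(\ell\) only via the additivity \(\Unk(A \# B) = \Unk(A) + \Unk(B)\), which it states without proof \emph{after} the theorem; you instead pin exactness down directly---\(\Unk(D) \ge \Unk([D]) = \ell\) from the classical knot-type bound for \(T(2,2\ell+1)\), and \(\Unk(D) \le \ell\) from the explicit toggles in the twist region together with the fact that the composition realizes knot type as the connect sum of its factors---which is the more defensible argument. The only bookkeeping you elide is that the paper works with rooted diagrams, but rooting changes counts by a factor of at most \(4n\) and is invisible at the level of exponential growth rates.
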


\begin{proof}
  Let \(\ArbKnotDiaUK_n(\ell)\) denote the set of (reduced or general) rooted knot diagrams with unknotting number precisely \(\ell\). Define subsets \(\ArbKnotDiaUK_n^+(\ell)\) and \(\ArbKnotDiaUK_n^-(\ell)\) to be those diagrams with the property that toggling the root vertex (\textit{i.e.}\ the vertex which the root points to) increases or decreases the unknotting number of the diagram, respectively. Observe from the bijection where one toggles the root vertex that \(\ArbKnotDiaUK_n^+(\ell) \cong \ArbKnotDiaUK_n^-(\ell-1)\).

  Let \(c_n(\ell), c_n^+(\ell),\) and \(c_n^-(\ell)\) be the sizes of the sets \(\ArbKnotDiaUK_n(\ell), \ArbKnotDiaUK_n^+(\ell),\) and \(\ArbKnotDiaUK_n^-(\ell)\) respectively.

  Consider \(\ell > 0\). A toggle of any crossing in a diagram in \(\ArbKnotDiaUK_n(\ell)\) will either increase, decrease, or fix the unknotting number. As \(\ell > 0\), there \emph{must} be at least one crossing whose toggling decreases the unknotting number (this is the definition). Hence \(|c_n(\ell)| \le n|c_n^-(\ell)|\). Then,
  \[\frac{1}{n}|c_n(\ell)| \le |c_n^-(\ell)| \le
    |c_n(\ell)|,\]
  where the latter inequality comes from that one is the subset of the other. This implies then that
  \[ \lim_{n\to\infty}(c_n^-)^{1/n} = \lim_{n\to\infty}(c_n)^{1/n}.\]

  Let \(D\) be a rooted standard diagram representation of the \((2,2\ell+1)\) torus knot. It is known that \(D\) has unknotting number \(\ell\). So there is an injection from \(\ArbKnotDiaUK_n(0) \hookrightarrow \ArbKnotDiaUK_{n+(2\ell+1)}(\ell)\) given by connect-summing \(D\) to each rooted unknot in \(\ArbKnotDiaUK_n(0)\). This implies then that
  \[ \tau_0 = \lim_{n\to\infty}{(c_n(0))^{1/n}} \le
    \lim_{n\to\infty}{(c_{n+(2\ell+1)}(\ell))^{1/n}} =
    \lim_{n\to\infty}{(c_{n}(\ell))^{1/n}} \]

  The theorem then follows by an induction argument. For \(\ell=0\) we have that \(|\ArbKnotDiaUK_n(0)|\) is simply the class of rooted unknot diagrams with \(n\) crossings and the growth rate of this class is \(\tau_0\) by definition.

  On the other hand, suppose that for some \(\ell \ge 0\), \(\lim_{n\to\infty}(c_n(\ell))^{1/n} = \tau_0\). So,
  \[ \tau_0 = \lim_{n\to\infty}{(c_n(\ell))^{1/n}} \ge
    \lim_{n\to\infty}{(c_n^+(\ell))^{1/n}} =
    \lim_{n\to\infty}{(c_n^-(\ell+1))^{1/n}} =
    \lim_{n\to\infty}{(c_n(\ell+1))^{1/n}} \ge \tau_0, \]
  where the final inequality was explained above. Hence \(\tau_0 = \lim_{n\to\infty}{(c_n(\ell+1))^{1/n}}\) as desired.
\end{proof}

Diagrammatic unknotting number satisfies the following relation under connect
summation of two diagrams \(A, B\):
\[ \Unk(A\#B) = \Unk(A) + \Unk(B). \]
This implies that connect summation of an unknotted 2-tangle to a diagram with unknotting number \(\ell\) produces a resultant diagram which is also unknotting number \(\ell\). This means that if \(P\) is an unknotted tangle which is admissible for attachment into diagrams in \(\ArbClass\), it is \emph{also} admissible for attachment into diagrams \(\ArbKnotDiaUK(\ell)\). This then implies;

\begin{theorem}
  \label{thm:unknumslipknots}
  Let \([K]\) be any knot type, and \(\ArbKnotDiaUK(\ell)\) the class of either general or reduced diagrams with unknotting number \(\ell\). Then there exists \(c > 0\) and \(1 > d > 0\) so that
  \[ \Prb(D \text{ contains \(\le c|D|\) copies of \([K]\) as a
      subknot}) < d^{|D|}, \]
  so that the probability that a random diagram in \(\ArbKnotDiaUK(\ell)\) contains the knot type \([K]\) as a subknot goes to one as the complexity of the diagram (\textit{i.e.}\ the number of vertices) goes to infinity.
\end{theorem}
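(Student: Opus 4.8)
The plan is to run the same reduction to the pattern-theorem machinery of~\cite{Chapman2016} that was used for unknot diagrams in Theorem~\ref{thm:unkslipknots}, exploiting the fact that a pattern theorem for a class \(\ArbClass\) follows once one has two ingredients: (i) existence of the growth constant \(\lim_{n\to\infty}\arbclass_n^{1/n}\), and (ii) an admissible pattern, i.e.\ a tangle whose insertion at a linear number of sites keeps the host diagram inside \(\ArbClass\). For the class \(\ArbKnotDiaUK(\ell)\) ingredient (i) is exactly the growth-constant theorem established immediately above, where we showed \(\lim_{n\to\infty}(c_n(\ell))^{1/n} = \tau_0 < \infty\) exists.

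For ingredient (ii) I would take the reduced \(2\)-tangle \(D_2\) produced by Lemma~\ref{lem:doublinglemma} for the fixed knot type \([K]\). Its insertion by edge replacement (connect summation) yields a diagram carrying \([K]\) as a slipknot, and, being an \emph{unknotted} tangle, it introduces no new knot-type factor. Invoking the additivity \(\Unk(A\#B) = \Unk(A)+\Unk(B)\) recalled just before the statement, connect-summing \(D_2\) (which has \(\Unk(D_2)=0\)) to a diagram of unknotting number \(\ell\) produces a diagram of unknotting number \(\ell\) again. Hence any tangle admissible for attachment into the general or reduced diagram class \(\ArbClass\)---in particular \(D_2\), which was shown admissible in the proof of Theorem~\ref{thm:arbslipknots}---remains admissible for \(\ArbKnotDiaUK(\ell)\), precisely as observed in the paragraph preceding this statement.

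With (i) and (ii) in hand, the pattern theorem of~\cite{Chapman2016} applies to \(\ArbKnotDiaUK(\ell)\) with pattern \(D_2\), giving constants \(c>0\) and \(0<d<1\) for which a random diagram of size \(n\) has probability at most \(d^n\) of containing fewer than \(cn\) copies of \(D_2\). Since each inserted copy of \(D_2\) contributes \([K]\) as a subknot (indeed as a slipknot), the displayed bound and the claimed convergence of the probability to \(1\) follow verbatim from the proof of Theorem~\ref{thm:arbslipknots}.

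Essentially all of the real content lives in the two imported facts, so I do not expect a serious obstacle; the one point deserving care is confirming that class-closure under \(D_2\)-insertion is the \emph{only} property---beyond the already-established growth constant---demanded by the~\cite{Chapman2016} machinery, and, in the reduced case, that the inserted \(D_2\) leaves the host diagram reduced and \(3\)-edge-connected. Both are inherited directly from the general and reduced treatments carried out in Theorem~\ref{thm:arbslipknots}.
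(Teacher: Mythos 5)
Your proposal is correct and follows essentially the same route as the paper: the paper's proof likewise combines the growth-constant theorem for \(\ArbKnotDiaUK(\ell)\) proved just above with the additivity \(\Unk(A\#B)=\Unk(A)+\Unk(B)\), so that connect summation of the unknotted slipknot tangle from Lemma~\ref{lem:doublinglemma} remains an admissible attachment and the pattern-theorem argument of Theorem~\ref{thm:unkslipknots} applies verbatim. You merely spell out the two ingredients more explicitly than the paper's terse two-sentence proof does.
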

\begin{proof}
  The argument is the same as in the proof of Theorem~\ref{thm:unkslipknots}. Connect sum is a viable attachment scheme as connect summation of an unknot diagram fixes the unknotting number (in fact, the knot-theoretic unknotting number also fixed as the knot type itself is fixed).
\end{proof}

\subsection{Asymptotics}
\label{sec:asymptotics}

We have mentioned prior the conjecture:
\begin{conjecture}
  The asymptotic growth of knotoid diagrams is in fact
\[
  \frac{\subknotdia_n}{2^n} =
  \subknotshad_n \mathop{\sim}\limits_{n \to \infty}
  c'\knotgrowth^n \cdot n^{\gamma' - 1},
\]
for some constant \(\gamma'\) (it is furthermore believed \(\gamma' = \gamma\)).
\end{conjecture}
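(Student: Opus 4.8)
The plan is to prove the asymptotic by a singularity analysis of the knotoid-shadow generating function $I(z) = \sum_n \subknotshad_n z^n$, tying its dominant singularity to that of the knot-shadow generating function $K(z) = \sum_n \knotshad_n z^n$. Theorem~\ref{thm:knotoidgrowth} already supplies the exponential rate: the radius of convergence of $I(z)$ is $1/\mu_I$, so the factor $\mu_I^n$ is accounted for and the entire content of the statement is the sub-exponential correction $n^{\gamma'-1}$. By the transfer theorems of Flajolet and Odlyzko it then suffices to show that $I(z)$ extends analytically to a $\Delta$-domain at $z = 1/\mu_I$ and behaves there like $c''(1 - \mu_I z)^{-\gamma'}$; the exponent $s = \gamma'$ yields $[z^n]I(z) \sim \tfrac{c''}{\Gamma(\gamma')}\mu_I^n n^{\gamma'-1}$, the desired form. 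Proving the statement \emph{as displayed}, with rate $\knotgrowth$, additionally requires the companion conjecture $\mu_I = \knotgrowth$.

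The combinatorial input I would use to relate the two singularities is an edge-cut/closure correspondence. Cutting an edge $e$ of a knot shadow $\kappa$ merges the two faces it borders, so $\kappa \setminus e$ is a knotoid shadow whose two legs lie on a common face; conversely a knotoid shadow with legs on a common face closes up uniquely by reinserting that edge, recovering $\kappa$ together with the marked edge $e$. This is a bijection between edge-rooted knot shadows and the closable knotoid shadows, and since an $n$-crossing shadow has $2n$ edges it shows (for the appropriately rooted counts) that the number of closable knotoid shadows is exactly $2n\,\knotshad_n$. Thus closable knotoid shadows are an edge-pointing of knot shadows, which at the level of generating functions is the differentiation relation $\sum_n 2n\,\knotshad_n z^n = 2zK'(z)$. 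Differentiation raises the singularity exponent by one, sending $(1-\knotgrowth z)^{1-\gamma}$ to a multiple of $(1-\knotgrowth z)^{-\gamma}$; this is precisely the mechanism producing the power-law shift from $n^{\gamma-2}$ (knot shadows) to $n^{\gamma-1}$ (knotoid shadows), and hence the prediction $\gamma'=\gamma$, in exact parallel with the known shift between link shadows ($n^{-5/2}$) and multi-knotoid shadows ($n^{-3/2}$).

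The hard part, and the reason this remains a conjecture, is twofold. First, the power-law form for $I(z)$ is inherited from the corresponding statement for $K(z)$, and the latter is exactly the unproven Schaeffer--Zinn-Justin asymptotic: we know from~\cite{Chapman2016} only that $\lim_{n\to\infty}\knotshad_n^{1/n} = \knotgrowth$ exists, not that $K(z)$ admits singularity analysis (i.e.\ that it extends to a $\Delta$-domain with a pure algebraic singularity of exponent $1-\gamma$). Establishing this appears to require the O$(n)$-loop-model and two-dimensional-gravity description of $4$-valent maps underlying~\cite{Schaeffer2004}, in which $\gamma$ is the string-susceptibility exponent and opening a component is a marked-point (two-point-function) operation that raises the exponent by one; making that picture rigorous is the genuine obstacle. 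Second, the bijection above controls only the closable knotoid shadows, and a priori those whose legs lie on distinct faces could dominate the count; one must show they contribute the same exponential rate $\mu_I$ and a power law no larger than $n^{\gamma'-1}$, so that the closable family fixes the leading asymptotics. A map-enumeration estimate bounding the ``leg separation'' of a typical large knotoid shadow, or a direct identity expressing $I(z)$ through $K(z)$ and $K'(z)$, would close this gap, but both again bottom out in the missing analytic control of $K(z)$.
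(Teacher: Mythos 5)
This statement is a \emph{conjecture}: the paper never proves it, and offers only heuristic and conditional support --- a theorem giving \(\mu_I = \knotgrowth\) \emph{conditional} on the mean geodesic distance of knotoid shadows being \(o(n)\) (which addresses only the exponential rate, not \(\gamma'\)); a non-rigorous conformal-field-theory/KPZ argument for the exponent; and a decomposition \(\SubKnotShad = \SubKnotShad^0 \cup \SubKnotShad^1\) by whether the legs share a face, with \(\SubKnotShad^1\) injected into torus knot shadows (again bearing only on the exponential rate). You are right not to claim a proof, and right that the fundamental obstacle is the absence of any analytic control of \(K(z)\) beyond existence of \(\knotgrowth\). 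But one of your two combinatorial pillars is quantitatively wrong, and it inverts the expected structure of the problem.

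The error is in your edge-cut bijection and the dominance claim built on it. Under the rooting conventions actually in force --- \(\knotshad_n\) in the Schaeffer--Zinn-Justin conjecture counts \emph{rooted} shadows, and a knotoid shadow is self-rooted by its ordered legs --- closing the two legs of a same-face knotoid shadow produces precisely an oriented root edge, so the closable family is in bijection with rooted knot shadows: its count is \(\knotshad_n\), not \(2n\,\knotshad_n\), and its generating function is \(K(z)\), not \(2zK'(z)\) (your edge-pointing double-roots; this bijection is exactly the paper's observation \(\SubKnotShad^0 \cong \KnotShad\), and the conclusion \(\asymp \knotshad_n\) up to bounded factors persists under any consistent rooting convention). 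Hence the closable family carries exponent \(n^{\gamma-2}\) and \emph{cannot} fix the conjectured leading order \(n^{\gamma-1}\); the extra factor of \(n\) must come from shadows whose legs lie on \emph{distinct} faces, which is the opposite of your plan to bound that family above so the closable one dominates. The paper's exact solved analogue confirms this: \(\sublinkshad_n = \frac{n+2}{2}\,\linkshad_n\), so the legs of a typical multi-knotoid shadow share a face only with probability \(\sim 2/n\), and the shift from \(n^{-5/2}\) to \(n^{-3/2}\) is produced entirely by the legs-apart shadows. This is why the paper's route to even the exponential rate runs through controlling leg separation --- the injection \(\subknotshad_n[\ell] \le \knotshad_{n+\ell}\) (close the legs along a path crossing \(\ell\) edges, creating \(\ell\) new crossings), combined with the conditional hypothesis that mean geodesic distance is \(o(n)\), or alternatively the genus-one injection of \(\SubKnotShad^1\) --- and why nothing rigorous is said about \(\gamma'\) at all. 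Your singularity-analysis frame could in principle absorb the correction (the legs-apart family would have to supply a singularity like \((1-\knotgrowth z)^{-\gamma}\) on top of \(K(z)\)'s \((1-\knotgrowth z)^{1-\gamma}\)), but as stated your argument predicts the wrong family as dominant.
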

It is in fact still unknown whether \(\lim_{n\to\infty}{\knotshad_n^{1/n}} = \knotgrowth\), although Theorem~\ref{thm:knotoidgrowth} states that the limit exists. The statement of this conjecture arises from intuition from link shadows and self-avoiding polygons. In this section, we present additional evidence for this conjecture.

\subsubsection{Mean geodesic distance}
\label{sec:geodist}

We start with an argument for which we have numerical support. This involves the geodesic distance of an knotoid diagram:

\begin{definition}
  The \emph{geodesic distance}~\cite{Bouttier2003} (also called \emph{complexity}~\cite{Turaev2012} or \emph{height}~\cite{Gugumcu_2017}) \(d(L)\) of an knotoid diagram \(L\) is the minimum number of edges that any path in the surface of the diagram connecting the two legs crosses.
\end{definition}

This is a concept discussed for more general maps in~\cite{Bouttier2003}. One of their conclusions is that the average geodesic distance between the two open legs in the case of all open link shadows is the same as the average distance between any two faces in a closed link shadow, asymptotically proportional to \(n^{1/4}\). It is believed that the average geodesic distance for knot shadows is the same or smaller, as it is ``more difficult'' for knotoid shadows to have large geodesic distance. For instance, Figure~\ref{fig:max_geodist} compares the smallest knotoid shadow with geodesic distance 3 to the smallest open link shadow with the same geodesic distance.

\begin{figure}
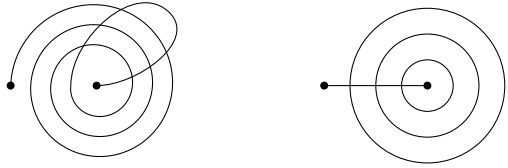

  \centering
  \includestandalone{figs/minimum_geodist}
  \caption{A minimal knotoid shadow with geodesic distance 3 (left) has 6
    crossings. A minimal open link shadow with geodesic distance 3 (right) only
    has 3.}
  \label{fig:max_geodist}
\end{figure}

Preliminary data suggests this conjecture. An alternate closure method for blossom trees of Bouttier, \textit{et al}~\cite{Bouttier_2002} is in bijection with multi-knotoid shadows. Introducing this algorithm into Gilles Schaeffer's \texttt{PlanarMap}~\cite{SchaefferPlanarMap} software we were able to gather data to test this conjecture. Figure~\ref{fig:geodistdata} shows curves for the mean geodesic distance of multi-knotoid diagrams and knotoid diagrams.

\begin{figure}
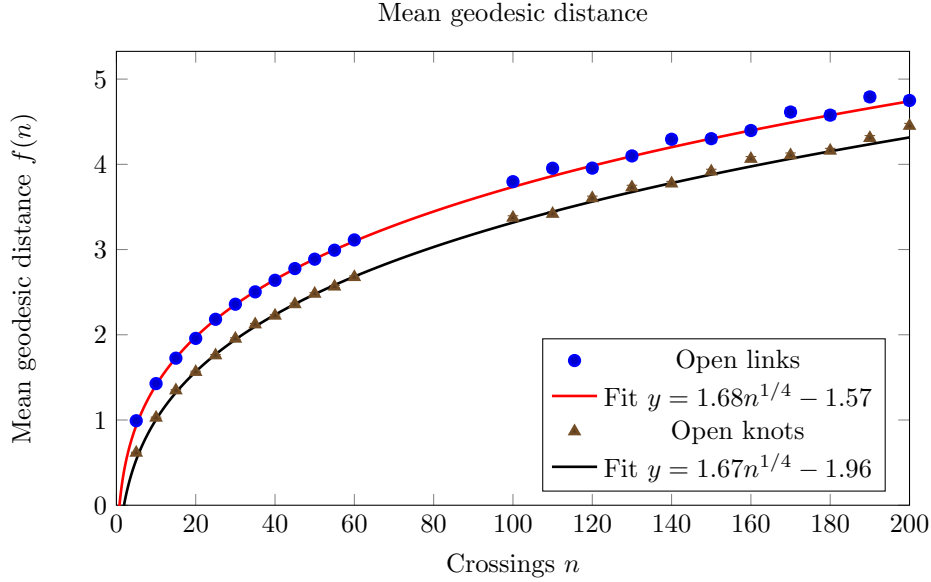

  \centering
  \includestandalone{figs/avg_geodist}
  \caption{The mean geodesic distance of knotoid shadows appears to grow at the same rate as for multi-knotoid diagrams.}
  \label{fig:geodistdata}
\end{figure}

If we assume this, notice that a result on the growth rates follows:
\begin{theorem}
  Let \(f(n)\) denote the average geodesic distance of knotoid shadows in \(\SubKnotShad_n\). Suppose that \(\lim_{n\to\infty}{f(n)/n} = 0\). Then the growth rate of knotoid shadows is,
  \[\lim_{n\to\infty}{\subknotshad_n^{1/n}} = \lim_{n\to\infty}{\knotshad_n^{1/n}} =
    \knotgrowth\]
\end{theorem}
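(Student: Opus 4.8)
The plan is to establish the two inequalities $\knotgrowth \le \mu_I$ and $\mu_I \le \knotgrowth$ separately. The first is immediate from the bound $\knotshad_n \le \subknotshad_n$ noted above: taking $n$-th roots and passing to the limit---which exists for both sequences, by the cited result for $\knotshad_n$ and by Theorem~\ref{thm:knotoidgrowth} for $\subknotshad_n$---gives $\knotgrowth = \lim_{n\to\infty}\knotshad_n^{1/n} \le \lim_{n\to\infty}\subknotshad_n^{1/n} = \mu_I$. The entire content of the theorem is therefore the reverse inequality $\mu_I \le \knotgrowth$, and it is here that the hypothesis on geodesic distance enters.

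For the reverse inequality, the plan is to \emph{close} each knotoid shadow into a genuine knot shadow along a minimal geodesic between its legs and then count the resulting knot shadows. First I would observe that closing a knotoid shadow \(L\) of \(n\) crossings and geodesic distance \(d(L) = d\) along a simple minimal geodesic arc produces a spherical knot shadow \(\Phi(L)\) with exactly \(n+d\) crossings: the arc meets \(d\) edges of \(L\) transversally, each intersection becoming a new \(4\)-valent crossing, and the single open strand of \(L\) closes up to the single knot component of \(\Phi(L)\). Fixing a canonical choice of minimal geodesic makes \(\Phi\) a well-defined map. Writing \(\subknotshad_n^{(d)}\) for the number of knotoid shadows of \(n\) crossings with \(d(L)=d\), I would then bound the multiplicity of \(\Phi\): given a knot shadow \(K\) of \(m = n+d\) crossings, any \(L \in \Phi^{-1}(K)\) is recovered by deleting a closure arc, and such an arc is a ``straight strand'' of \(K\) (it passes straight through each of the \(d\) crossings it created, since an arc crosses an edge transversally). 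A straight strand is determined by a starting flag and a length, so there are at most \(4m\) candidate closure arcs in \(K\); hence \(|\Phi^{-1}(K)| \le 4m\) and
\[ \subknotshad_n^{(d)} \le 4(n+d)\,\knotshad_{n+d}. \]

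The key step is then to use the hypothesis \(f(n)/n \to 0\) to discard the large-\(d\) terms, which is necessary since \(\knotshad_{n+d}\) grows with \(d\). Fix \(\epsilon > 0\). By Markov's inequality applied to \(f(n) = \frac{1}{\subknotshad_n}\sum_L d(L)\), the number of knotoid shadows with \(d(L) \ge \epsilon n\) is at most \(\frac{f(n)}{\epsilon n}\,\subknotshad_n =: \delta_n \subknotshad_n\) with \(\delta_n \to 0\); hence \(\subknotshad_n \le (1-\delta_n)^{-1}\sum_{d < \epsilon n} \subknotshad_n^{(d)}\). Combining with the multiplicity bound and using \(n+d < (1+\epsilon)n\),
\[ \subknotshad_n \le \frac{1}{1-\delta_n}\sum_{d<\epsilon n} 4(n+d)\,\knotshad_{n+d} \le \frac{8\epsilon n^2}{1-\delta_n}\max_{n \le m < (1+\epsilon)n}\knotshad_m. \]
Taking \(n\)-th roots, the polynomial and \((1-\delta_n)^{-1}\) prefactors disappear in the limit, and \(\knotshad_m^{1/m} \to \knotgrowth\) gives \(\mu_I \le \knotgrowth^{\,1+\epsilon}\). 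Letting \(\epsilon \to 0\) yields \(\mu_I \le \knotgrowth\), which together with the first inequality proves \(\mu_I = \knotgrowth\) and hence the displayed equalities.

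I expect the main obstacle to be the multiplicity bound on the closure map---precisely, verifying that the closure arc is combinatorially rigid enough (the straight-strand property, simplicity of the minimal geodesic, and transversality at each created crossing) that it can be recovered from \(K\) with only polynomially many choices. Even a cruder bound of the form \(\mathrm{poly}(m)\cdot B^d\) for a constant \(B\) would suffice, since \(B^{\epsilon n}\) contributes only a factor \(B^{\epsilon}\to 1\) to the growth rate; so the argument is robust, but making the geometric claims about minimal geodesics fully rigorous---that they may be taken simple and transversal, and how \(\Phi\) is specified when minimal geodesics are non-unique---is the delicate part.
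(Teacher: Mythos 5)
Your proposal is correct and follows essentially the same route as the paper: close each knotoid shadow along a minimal geodesic to bound the number of distance-\(\ell\) knotoid shadows by \(\knotshad_{n+\ell}\) (up to a harmless polynomial factor), then use Markov's inequality with the hypothesis \(f(n)/n \to 0\) to truncate to sublinear geodesic distance. The only cosmetic differences are that the paper asserts the closure map is injective (inverted on its image by contracting \(\ell\) crossings back from the root) where you settle for a polynomial multiplicity bound, and that it truncates at \(g(n) = 2f(n)\) retaining mass \(\alpha = 1/2\) rather than at \(\epsilon n\) retaining mass \(1 - o(1)\) and letting \(\epsilon \to 0\); neither difference affects the growth-rate computation.
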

\begin{proof}
  Partition the numbers \(\subknotshad_n\) of knotoid shadows with \(n\) crossings by geodesic distance: Let \(\subknotshad_n[\ell]\) be the number of \(n\)-crossing knotoid shadows of geodesic distance \(\ell\), so that \(\subknotshad_n = \sum_{\ell=0}^n \subknotshad_n[\ell]\) (\(n\) is an upper bound on geodesic distance; \(2n\) is even more trivial as it is the number of edges, and would equally suffice).

  First notice that \(\knotshad_n \le \subknotshad_n\) and hence \(\mu \le \mu_I\) by splitting the root edge of knot shadows counted by \(\knotshad_n\). Notice furthermore that \(\subknotshad_n[\ell] \le \knotshad_{n+\ell}\) as we can deterministically close any distance \(\ell\) knotoid shadow by introducing \(\ell\) new crossings (and this map is inverted on its image by contracting \(\ell\) crossings back from the root).

  Define \(\subknotshad_n[>\ell] = \sum_{j=\lfloor \ell \rfloor+1}^{n}{\subknotshad_n[j]}\) and \(\subknotshad_n[\le \ell] = \sum_{j=0}^{\lfloor \ell \rfloor}{\subknotshad_n[j]}\). For a nonnegative function \(g(n)\) obeying \(\lim_{n\to\infty}{g(n)/n = 0}\), let
  \[ m = \argmax_{\ell \le \lceil g(n) \rceil}\{\subknotshad_n[\ell]\} \] so
  that \( \lceil g(n) \rceil \subknotshad_n[m] \ge \subknotshad_n[\le g(n)]\)
  (observe that trivially \(\lceil g(n) \rceil \ge m\)).

  First, suppose we have some constant \(\alpha\) for which, independently of \(n\), we can partition \(\subknotshad_n =
  \subknotshad_n[\le g(n)] + \subknotshad_n[> g(n)]\) with \(\alpha \subknotshad_n \le \subknotshad_n[\le
  g(n)]\). Say \(g(n)\) is a {\em pivot\/} for
  \(\alpha\). Then \(\subknotshad_n \le \alpha^{-1}\subknotshad_n[\le g(n)]\) and
  furthermore,
  \[ \knotshad_n \le \subknotshad_n \le \alpha^{-1}\subknotshad_n[\le g(n)]
    \le \alpha^{-1}\lceil g(n) \rceil \subknotshad_n[m]
    \le \alpha^{-1}\lceil g(n) \rceil \knotshad_{n+m}
    \le \alpha^{-1} (g(n)+1) \knotshad_{n+\lceil g(n) \rceil} \]
  So,
  \begin{align*}
    \ln(\mu) \le \ln(\mu_I)
    &\le
      \limsup_{n\to\infty}{
      \frac{-\ln(\alpha) + \ln(g(n)+1) + \ln(\knotshad_{n+\lceil g(n) \rceil})}{n}} \\
    &= \limsup_{n\to\infty}{\frac{\ln(\knotshad_{n+\lceil g(n) \rceil})}{n}} \\
    &= \left( \limsup_{n\to\infty}{
      \frac{\ln(\knotshad_{n+\lceil g(n) \rceil})}{n + \lceil g(n) \rceil}} \right)
      \left( \limsup_{n\to\infty}{\frac{n + \lceil g(n) \rceil}{n}} \right) \\
    &\le \ln(\mu),
  \end{align*}
  provided the condition on \(g(n)\) established prior.

  Let \(g(n) \ge 2f(n)\); then \(g(n)\) obeys \(\lim_{n\to\infty}{g(n)/n} = 0\), by the assumption on \(f(n)\). By Markov's inequality~\cite{Flajolet2009}, we have that \(\subknotshad_n[\le g(n)] \ge (f(n)/g(n))\subknotshad_n = \subknotshad_n/2\). Hence \(g(n)\) is a pivot for \(\alpha = 1/2\). This then shows that the growth rate result follows.
\end{proof}
Of course, this is dependent on proving such a bound on the mean geodesic distance of knotoid shadows. No upper bounds appear to be known. On the other hand, the affine index and arrow polynomials of a knotoid diagram provide lower bounds on its geodesic distance~\cite{Gugumcu_2017}, so understanding the average geodesic distance will also lead to new understanding of knotoid invariants.

\subsubsection{Conformal field theory}
\label{sec:cft}

We first reason out the conjecture using a conformal field theory argument similar to that of~\cite{DiFrancesco2000}. Zinn-Justin and Zuber have proposed the following matrix integral~\cite{ZinnJustin2009} for the study of knot diagrams:
\[ Z(N, \tau; x) = \int{\prod_{a=1}^\tau{dM_a}\exp{\left(
        -\frac{N}{2} \Tr{ \left(
            \sum_{a=1}^\tau{M_a^2} -
            \frac{x}{2} \sum_{a,b=1}^\tau{(M_aM_b)^2}
          \right)}
      \right)}}, \]
where the integral is over all \(N \times N\) Hermitian matrices, \(\tau\) is the number of colors with which loops are marked, and \(x\) is a formal variable. What is important is that the formal series
\[ F(x) = \lim_{\tau\to 0}\lim_{N\to\infty}\frac{\log Z(N, \tau; x)}{N^2} \]
is the generating function which counts all knot shadows, \(\sum_n{x^n\knotshad_n}\). Applying the ideas of Di Francesco, \textit{et al.}, one defines the operator
\[ \varphi_1 = \lim_{N\to\infty}\frac{1}{N}\Tr{\sum_{a=1}^\tau{M_a}}, \]
which adds a single node of degree 1. Hence the expectation \(\lim_{\tau\to 0}\frac 1\tau \langle \varphi_1\varphi_1 \rangle\) should be a generating series for the knotoid shadows. One should then, after calculating the conformal dimension of \(\varphi_1\) and using the KPZ formula, be able to conjecture precisely the the critical exponent of \(\gamma'\). We note that the difficulty in this case may be greater due to the inverse logarithmic correction term in the conjectured asymptotic growth of knot diagrams~\cite{PZJPrivateComm}.

This argument is non-rigorous, and any results depend on the validity of the mathematical physical interpretation.

\subsubsection{Enumeration of virtual knot diagrams}
\label{sec:virtknots}

Here is another argument for the exponential growth term
\[ \lim_{n\to\infty} \subknotshad_n^{1/n} = \knotgrowth \]
in the conjecture. Consider a knotoid shadow. Its two loose ends either lie in the same face, or in two different faces. This leads to the decomposition of the class of knotoid shadows into
\[ \SubKnotShad = \SubKnotShad^0 \cup \SubKnotShad^1, \]
where \(\SubKnotShad^0\) consists of all knotoid diagrams whose loose ends lie in the same face and \(\SubKnotShad^1\) consists of all knotoid diagrams whose loose ends lie in different faces. Observe that \(\SubKnotShad^0\) is equivalent to \(\KnotShad\) by joining the loose ends to form the root edge (whose direction is determined by the order of the loose ends which were joined).

On the other hand, consider a knotoid shadow \(D\) in \(\SubKnotShad^1\); by adding a handle connecting both of the faces in \(D\) with loose ends, one is able to produce a new rooted knot shadow \(D'\) on the \emph{torus} by now joining the two loose ends in \(D\) by way of the added handle\footnote{Knot diagrams on arbitrary surfaces are called \emph{virtual knot diagrams}~\cite{Kauffman1999} and are themselves worthy of study.}; observe that all faces in \(D'\) are still disks as critical for the definition of a map. This produces an injection from \(\SubKnotShad^1\) into \(\KnotShad^T\), the class of knot shadows on the torus.

For many different classes of maps \(\ArbClass^S\) on a fixed surface
\(S\), it has been shown~\cite{Bender1986,Gao_1991,Gao_1993} that
\[ \arbclass^S_n \sim \kappa(\ArbClass,S) \tau({\ArbClass})^n n^{\gamma(\ArbClass,\chi(S))}, \]
where \(\kappa(\ArbClass,S)\) is a constant depending on the class and the surface, \(\tau(\ArbClass)\) is a constant depending on the class, and \(\gamma(\ArbClass,\chi(S))\) is a constant depending on the class and the Euler characteristic of the surface. This leads one to conjecture,
\begin{conjecture}
  The asymptotic growth of knot diagrams on an arbitrary fixed surface
  \(S\) is conjectured to be
\[
  \frac{\knotdia^S_n}{2^n} =
  \knotshad^S_n \mathop{\sim}\limits_{n \to \infty}
  c''\knotgrowth^n \cdot n^{\gamma''(\KnotShad,\chi(S))},
\]
where \(\gamma''(\KnotShad,\chi(S))\) is a constant depending only on the Euler
characteristic of the surface \(S\) (it is believed that
\(\gamma''(\KnotShad,\chi(S)) = ((\gamma-2)(\chi(S)/2))\)).
\end{conjecture}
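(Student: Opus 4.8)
The plan is to prove the two assertions of the conjecture separately: that the exponential growth constant is genus-independent and equal to \(\knotgrowth\), and that the subexponential exponent \(\gamma''(\KnotShad,\chi(S))\) is an affine function of the Euler characteristic. The first is combinatorial and nearly within reach of the techniques already used above; the second requires the analytic map-enumeration machinery and is genuinely conditional on the planar case.

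First I would pin the exponential rate by a two-sided bound. For the lower bound, embed a spherical knot shadow of \(n - c(S)\) crossings in a disk on \(S\) and attach \(g = g(S)\) handles, each carrying a bounded-size strand-rerouting gadget chosen so that the resulting map is cellular and retains a single link component; this is the many-handle generalization of the handle construction used above to inject \(\SubKnotShad^1\) into \(\KnotShad^T\), and it gives \(\knotshad^S_n \ge \knotshad_{n-c(S)}\), hence \(\liminf_n (\knotshad^S_n)^{1/n} \ge \knotgrowth\). For the upper bound I would run this in reverse: given a cellular knot shadow on \(S\), cut along a canonical system of \(g\) disjoint curves realizing a handle decomposition. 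Each cut lowers the genus and turns the single closed strand into a single planar strand with \(O(g)\) pairs of legs, i.e.\ a planar multi-knotoid-type shadow on one underlying component; the gluing data recording which legs are reidentified and in which cyclic positions is a set of size polynomial in \(n\) for fixed \(g\). Thus \(\knotshad^S_n \le \mathrm{poly}(n)\cdot \subknotshad_{n+O(g)}\), and since \(\lim_n \subknotshad_n^{1/n}\) exists by Theorem~\ref{thm:knotoidgrowth}, we get \(\limsup_n (\knotshad^S_n)^{1/n} \le \mu_I\). Together with the known inequality \(\knotgrowth \le \mu_I\), the surface rate is squeezed into \([\knotgrowth, \mu_I]\), and equals \(\knotgrowth\) exactly when the central conjecture \(\mu_I = \knotgrowth\) holds.

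Next I would extract the exponent \(\gamma''\) from the genus expansion underlying the cited theorems of Bender, Canfield and Gao, equivalently the \(1/N^2\) expansion of the Zinn-Justin--Zuber integral \(Z(N,\tau;x)\) recorded above. Writing \(\tfrac{1}{N^2}\log Z = \sum_{g\ge 0} N^{-2g}F_g(x)\), the coefficient \(F_g\) generates knot shadows of genus \(g\). The universal feature of such expansions is that every \(F_g\) is singular at the same point \(x_c = 1/\knotgrowth\) with a singular exponent affine in the genus: if the planar free energy satisfies \(F_0(x)\sim (x_c-x)^{1-\gamma}\), matching \(\knotshad_n \sim C\knotgrowth^n n^{\gamma-2}\), then topological recursion forces \(F_g(x)\sim (x_c-x)^{(2-\gamma)\chi(S)/2\,-\,1}\). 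A transfer theorem converts this into \(\knotshad^S_n \sim c''\knotgrowth^n n^{(\gamma-2)\chi(S)/2}\), which is the claimed form with \(\gamma''(\KnotShad,\chi(S)) = (\gamma-2)\chi(S)/2\); the sphere \((\chi=2)\) recovers \(\gamma-2\) and the torus \((\chi=0)\) gives \(0\), consistent with the bijective picture above.

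The hard part will be that both steps are ultimately conditional on the still-open planar problem. The sharp upper bound on the exponential rate reduces, as shown, to the unproven equality \(\mu_I = \knotgrowth\); without it one obtains only genus-independence up to the gap \([\knotgrowth,\mu_I]\). More seriously, both the value \(\gamma = \tfrac{1-\sqrt{13}}{6}\) and the applicability of the transfer/topological-recursion framework to the single-component class rest on the unproven Schaeffer--Zinn-Justin asymptotic for \(\knotshad_n\), and the conjectured inverse-logarithmic correction for knot diagrams would further obstruct a naive singularity analysis. The realistic target is therefore a conditional theorem: assuming the planar asymptotic \(\knotshad_n \sim C\knotgrowth^n n^{\gamma-2}\) together with the analytic hypotheses of the Bender--Gao theorems, the cutting/gluing bijection and the genus expansion deliver the stated surface asymptotic for every fixed \(S\).
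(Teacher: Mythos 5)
You cannot be matched against a proof in the paper, because the paper does not prove this statement: it is stated as a conjecture, supported only by the injection of \(\SubKnotShad^1\) into the class of torus knot shadows and by analogy with the Bender--Canfield--Gao asymptotics for map classes on fixed surfaces --- and the paper explicitly cautions that those proofs proceed by \emph{first} enumerating boundary objects analogous to knotoid shadows, ``so this method of reasoning is likely backwards.'' Your proposal honestly arrives at the same place (a program conditional on the unproven planar asymptotic and on the analytic hypotheses of the genus-expansion machinery), and your second step inherits exactly the circularity the paper warns about: invoking topological recursion or the Bender--Gao framework for the single-component, \(\tau\to 0\) class presupposes the kind of surface-by-surface enumeration the conjecture is asking for, and no universality result is known for this class; the conjectured inverse-logarithmic correction on the sphere, which you note, further obstructs a clean transfer-theorem argument.

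There is also a concrete error in the one step you present as essentially unconditional, the upper bound \(\knotshad^S_n \le \mathrm{poly}(n)\cdot\subknotshad_{n+O(g)}\). Cutting \(S\) along a curve system produces a pair of legs for \emph{every} intersection of the cut curves with edges of the shadow, and since the shadow is cellular, every essential curve must cross it; the minimal number of such crossings is not bounded by a function of \(g\) alone but can grow with \(n\). (The paper's own discussion of geodesic distance, where the analogous face-to-face distance grows like \(n^{1/4}\), is exactly the obstruction: this is why the paper's handle construction runs only in the direction \(\SubKnotShad^1 \hookrightarrow \KnotShad^T\), with no claimed inverse.) So ``a single planar strand with \(O(g)\) pairs of legs'' is false in general; moreover an object with \(k>1\) pairs of legs is not a knotoid shadow in the paper's sense (a diagram missing \emph{one} edge), so \(\subknotshad\) does not count it, and the gluing data is not polynomial once the cut width grows. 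Without this bound your squeeze argument for the exponential rate collapses to the one-sided inequality \(\liminf_n(\knotshad^S_n)^{1/n} \ge \knotgrowth\) from the handle-attachment direction, which is the extent of what the paper's heuristic actually delivers.
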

If in fact this conjecture is true, that \(\SubKnotShad\) decomposes into rooted knot shadows and a subset of rooted knot shadows on the torus would imply that the exponential growth terms for knotoid shadows is the same as knot shadows. We note though that the original proofs of~\cite{Bender1986} rely on \emph{first} proving results on enumerations of maps analogous in their settings to knotoid shadows (so this method of reasoning is likely backwards).

Finally, we mention a connection of this problem to the study of oriented Gauss diagrams (also known as oriented chord diagrams). Oriented Gauss diagrams are in bijection with combinatorial curves on arbitrary orientable surfaces~\cite{Carter_1991}. Oriented Gauss diagrams have a different class of growth (there are \((2n)!/n!\) with \(n\) chords~\cite{Linial10}), so this connection alone is insufficient for proving the conjectures of this section. Having control over the genus of such objects could lead to understanding whether the exponential growth rate \(\mu_K\) depends on \(g\) and possibly even relations for the power law term \(\gamma''(\KnotShad, g)\). It is known that the genus of a random oriented Gauss diagram is large~\cite{Linial10}: \(\Theta(n - \log n)\). We note that in the case of \emph{unoriented} Gauss diagrams with a \emph{different definition} of genus, Harer and Zagier~\cite{Zagier1986} present a generating function which enumerates unoriented Gauss diagrams by genus. It is not known how these two definitions are related.

\section{Conclusion}
\label{sec:conclusion}







We can consider slightly different definitions for contraction while preserving results in this paper. Under one such alternate definition that produces ``non-smooth'' diagrams with some 2-valent vertices, we can produce graphics (Figures~\ref{fig:subspectrum} and~\ref{fig:subspectrum51}) describing the appearances of subknots and slipknots inside of knot diagrams similar to those of~\cite{Rawdon2015,Rawdon2012}. In these \emph{disk matrices}, the (normalized) radius represents the fraction of the diagram used as the subdiagram; the other circle hence describes the knot type of the whole diagram. The angle corresponds to the edge index, ordered around the knot diagram with an orientation, from which the diagram is split and contracted.

We plan to further investigate the relation between results for space curves and those for knot diagrams. It is encouraging how similar these figures are to Rawdon's. Namely, observe how slipknots represent as tendrils starting near the outer rim and slowly winding towards the center before vanishing.

\begin{figure}[hbtp]
  \centering
  \begin{subfigure}[hbtp]{.4\linewidth}
    \centering
    \scalebox{.5}{
      \definecolor{linkcolor0}{rgb}{0.85, 0.15, 0.15}
\begin{tikzpicture}[line width=2.4, line cap=round, line join=round]
  \begin{scope}[color=black
  ]
    \draw (4.42, 5.33) -- (2.52, 5.33) -- (2.52, 4.29);
    \draw (2.52, 4.29) -- (2.52, 3.44);
    \draw (2.52, 3.06) -- (2.52, 2.40);
    \draw (2.52, 2.01) -- (2.52, 1.16) -- (4.61, 1.16) -- (4.61, 4.10);
    \draw (4.61, 4.48) -- (4.61, 5.33);
    \draw (4.61, 5.33) -- (4.61, 6.38) -- (9.83, 6.38) -- (9.83, 2.21) -- (7.74, 2.21) -- (7.74, 3.25);
    \draw (7.74, 3.25) -- (7.74, 4.10);
    \draw (7.74, 4.48) -- (7.74, 5.33) -- (6.70, 5.33) -- (6.70, 4.29);
    \draw (6.70, 4.29) -- (6.70, 3.25) -- (7.55, 3.25);
    \draw (7.93, 3.25) -- (8.78, 3.25) -- (8.78, 4.29) -- (7.74, 4.29);
    \draw (7.74, 4.29) -- (6.89, 4.29);
    \draw (6.50, 4.29) -- (5.84, 4.29);
    \draw (5.46, 4.29) -- (4.61, 4.29);
    \draw (4.61, 4.29) -- (2.71, 4.29);
    \draw (2.33, 4.29) -- (1.48, 4.29) -- (1.48, 3.25);
    \draw (1.48, 3.25) -- (1.48, 2.21) -- (2.52, 2.21);
    \draw (2.52, 2.21) -- (3.57, 2.21) -- (3.57, 3.25) -- (2.52, 3.25);
    \draw (2.52, 3.25) -- (1.67, 3.25);
    \draw (1.29, 3.25) -- (0.43, 3.25) -- (0.43, 0.12) -- (5.65, 0.12) -- (5.65, 4.29);
    \draw (5.65, 4.29) -- (5.65, 5.33) -- (4.80, 5.33);
  \end{scope}
\end{tikzpicture}
    }
  \caption{A knot diagram of knot type \(4_1\#3_1\).}
  \label{fig:diagramsub}
  \end{subfigure}\hfil
  \begin{subfigure}[hbtp]{.4\linewidth}
    \centering
    \includegraphics[height=2.3in,viewport=120 40 480 400,clip=true]{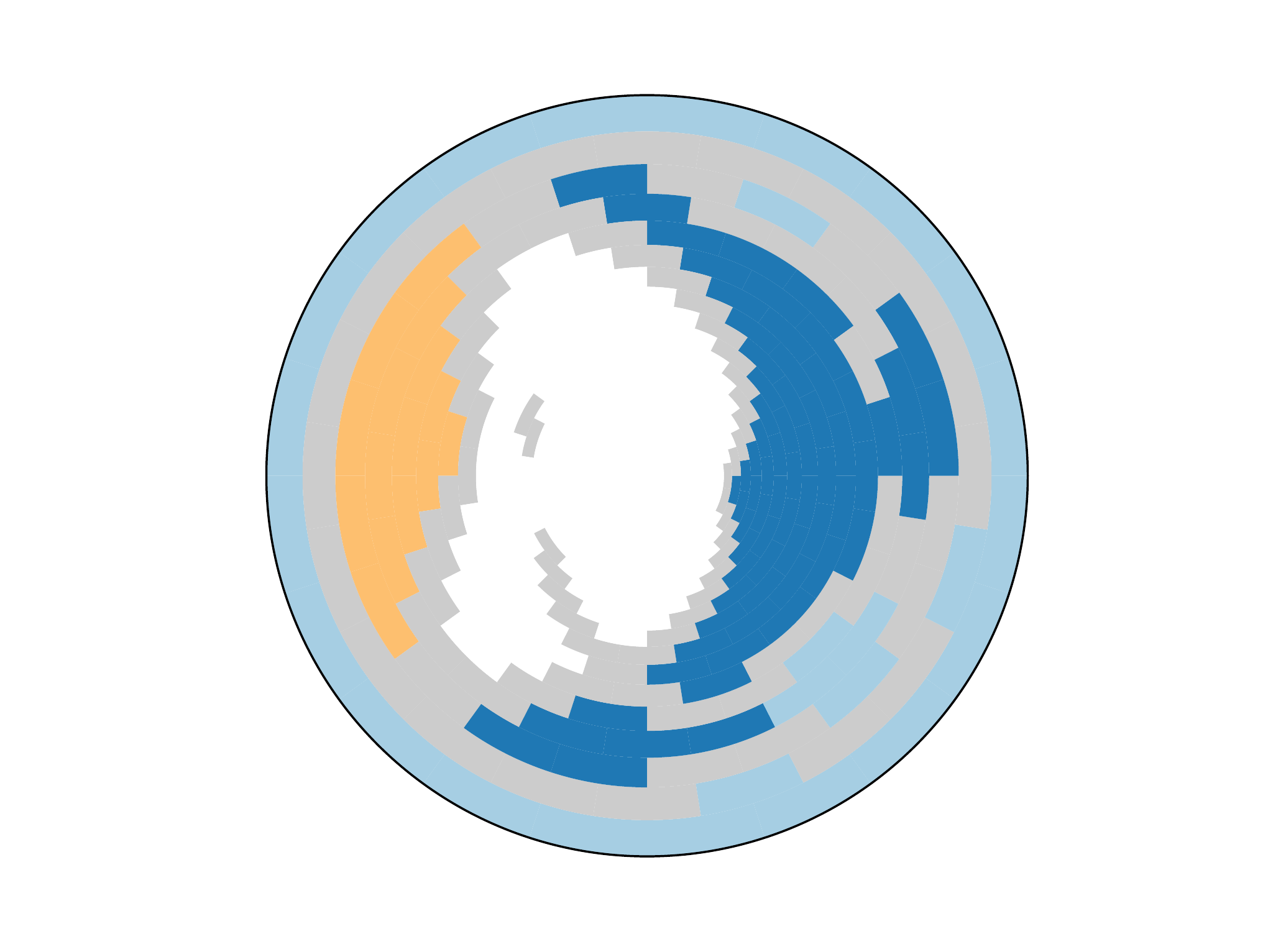}
    \caption{Disk matrix of subknots in the diagram
      in~\ref{fig:diagramsub}. Light blue is \(4_1\#3_1\), orange is
      \(4_1\), and dark blue is \(3_1\). Mixed knot types are gray
      and unknots are white.}
  \label{fig:subspectrum}
  \end{subfigure}
\end{figure}

\begin{figure}[hbtp]
  \centering
  \begin{subfigure}[hbtp]{.4\linewidth}
    \centering
    \scalebox{.5}{
      \definecolor{linkcolor0}{rgb}{0.85, 0.15, 0.15}
\begin{tikzpicture}[line width=2.4, line cap=round, line join=round]
  \begin{scope}[color=black]
    \draw (7.48, 7.16) -- (7.48, 9.50) -- (2.79, 9.50) -- (2.79, 7.35);
    \draw (2.79, 6.97) -- (2.79, 4.81);
    \draw (2.79, 4.81) -- (2.79, 2.47) -- (4.94, 2.47);
    \draw (5.33, 2.47) -- (7.48, 2.47);
    \draw (7.48, 2.47) -- (9.82, 2.47) -- (9.82, 7.16) -- (7.67, 7.16);
    \draw (7.29, 7.16) -- (2.79, 7.16);
    \draw (2.79, 7.16) -- (0.45, 7.16) -- (0.45, 4.81) -- (2.60, 4.81);
    \draw (2.98, 4.81) -- (5.13, 4.81) -- (5.13, 2.47);
    \draw (5.13, 2.47) -- (5.13, 0.13) -- (7.48, 0.13) -- (7.48, 2.28);
    \draw (7.48, 2.66) -- (7.48, 7.16);
  \end{scope}
\end{tikzpicture}
    }
  \caption{The standard knot diagram for the torus knot \(5_1\).}
  \label{fig:diagramsub51}
  \end{subfigure}\hfil
  \begin{subfigure}[hbtp]{.4\linewidth}
    \centering
    \includegraphics[height=2.3in,viewport=120 40 480 400,clip=true]{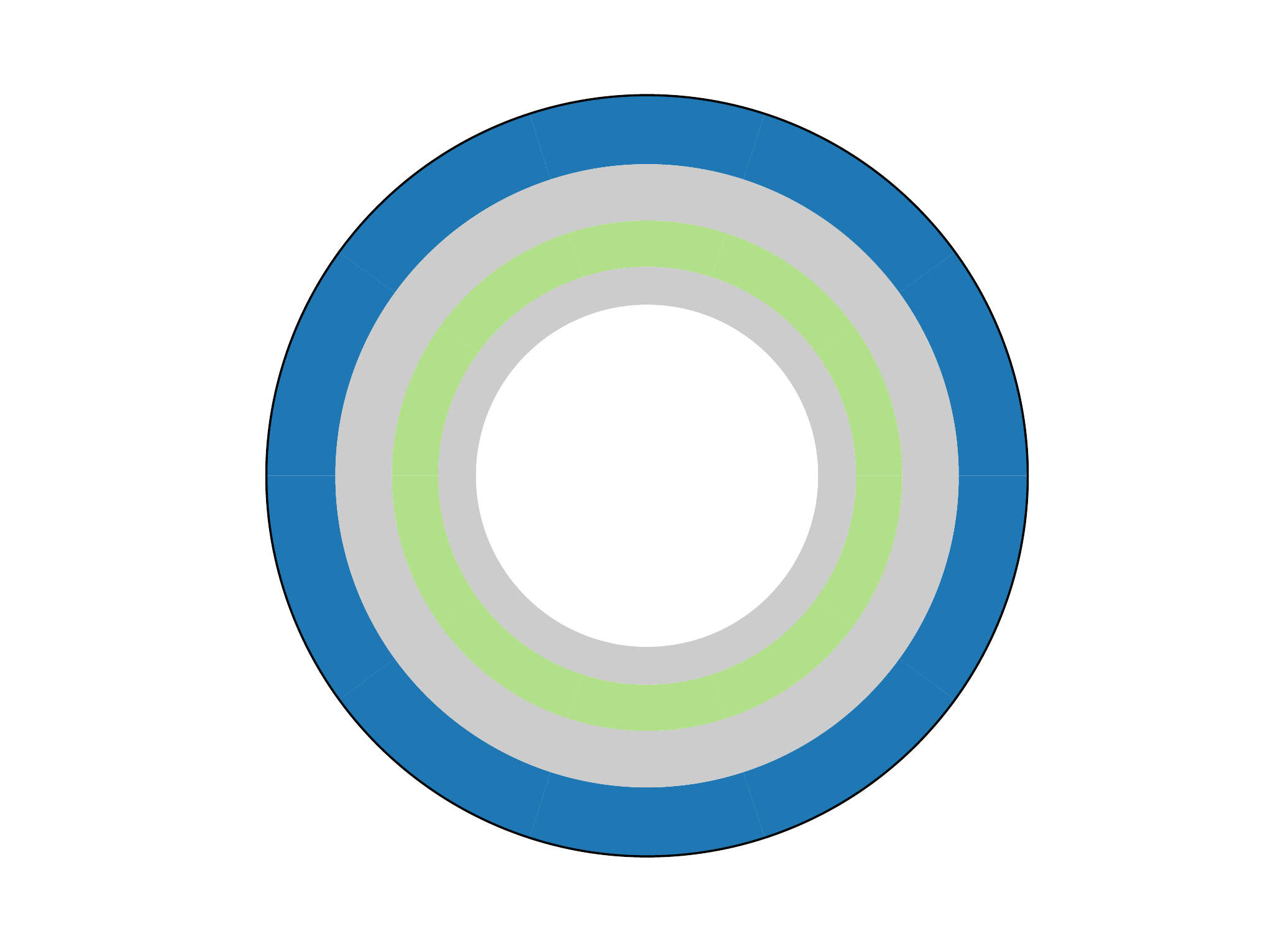}
    \caption{Disk matrix of subknots in the diagram
      in~\ref{fig:diagramsub51}. Blue is \(5_1\), orange is
      \(4_1\), and light green is \(3_1\). Mixed knot types are
      gray and unknots are white. Cf.\ figure 2b
      in~\cite{Rawdon2015}.}
  \label{fig:subspectrum51}
  \end{subfigure}
\end{figure}

Ideally we hope that results which we can show in the random diagram
model---such as that almost all unknot diagrams are slipknotted---can
be transferred to other models of knots with ease. It is promising
though that other results (and the methods used in proving them) for
different models of random knots apply for random knot
diagrams. We hope that for some other ``universal'' properties we can
move in the other direction.

Indeed it is still an open question of which type of random knotting,
e.g.\ whether the knots are constrained to a sphere, slab, or tube,
the random knot diagram model best approximates. The same is true for
other classes of knot diagrams: We have discussed reduced diagrams and
prime diagrams but expect that results hold for further classes such
as 6-edge-connected diagrams that have no immediate ``knot theoretic''
connection.

\FloatBarrier

\begingroup
\raggedright{}
\sloppy
\printbibliography{}
\endgroup

\end{document}
